\newcommand{\capac}[2]{\operatorname{cap}(#1,#2)}
\newcommand{\Hardys}{H^2(\mathbb B_d)}
\newcommand{\inter}{\operatorname{int}}
\renewcommand{\MR}[1]{}
\title{Totally null sets and capacity in Dirichlet type spaces}
\author{Nikolaos Chalmoukis}
\address{Dipartimento di Matematica, Universit\`a di Bologna, 40126, Bologna, Italy}
\email{nikolaos.chalmoukis2@unibo.it}
\thanks{N.C. was supported by the fellowship INDAM-DP-COFUND-2015 ``INdAM Doctoral Programme in Mathematics and/or Applications cofund by Marie Sklodowska-Curie Actions'' \# 713485.}
\author{Michael Hartz}
\address{Fachrichtung Mathematik, Universit\"at des Saarlandes, 66123 Saarbr\"ucken, Germany}
\email{hartz@math.uni-sb.de}
\thanks{M.H. was partially supported by a GIF grant.}
\subjclass[2010]{Primary 46E22; Secondary 31B15}
\keywords{Dirichlet space, capacity, totally null set, peak interpolation}
\begin{document}

\begin{abstract}
 In the context of Dirichlet type spaces on the unit ball of $\mathbb{C}^d$, also known as
 Hardy--Sobolev or Besov--Sobolev spaces,
 we compare two notions of smallness for compact subsets of the unit sphere.
  We show that the functional analytic notion of being totally null agrees with the potential theoretic notion of having capacity zero.
  In particular, this applies to the classical Dirichlet space on the unit disc and logarithmic capacity.
  In combination with a peak interpolation result of Davidson and the second named author,
  we obtain strengthenings of boundary interpolation theorems of Peller and 
  Khrushch\"{e}v and of Cohn and Verbitsky.
\end{abstract}

\maketitle

\section{Introduction}

\subsection{Background}
Let $\mathbb{D} \subset \mathbb{C}$ denote the open unit disc and let
\begin{equation*}
  H^2 = \Big\{ f \in \mathcal{O}(\mathbb{D}) : \sup_{0 \le r < 1} \int_{0}^{2 \pi} |f (r e^{ i t})|^2 \, dt < \infty \Big \}
\end{equation*}
be the classical Hardy space.
It is well known that $H^2$ can be identified with the closed subspace of all functions in
$L^2(\partial \mathbb{D})$
whose negative Fourier coefficients vanish.
Correspondingly, subsets of $\partial \mathbb{D}$ of linear Lebesgue
measure zero frequently play the role of small or negligible sets
in the theory of $H^2$ and related spaces.
For instance, a classical theorem of Fatou shows that every function in $H^2$
has radial limits outside of a subset of $\partial \mathbb{D}$ of Lebesgue measure zero;
see for instance \cite[Chapter 3]{Hoffman62}.
For the disc algebra
\begin{equation*}
  A(\mathbb{D}) = \{ f \in C (\overline{\mathbb{D}}): f \big|_{\mathbb{D}} \in \mathcal{O}(\mathbb{D}) \},
\end{equation*}
the Rudin--Carleson theorem shows that
every compact set $E \subset \partial \mathbb{D}$ of Lebesgue measure zero is an interpolation set for $A(\mathbb{D})$,
meaning that for each $g \in C(E)$, there exists $f \in A(\mathbb{D})$ with $f \big|_E = g$.
In fact, one can achieve that $|f(z)| < \|g\|_\infty$ for $z \in \overline{\mathbb{D}} \setminus E$
(provided that $g$ is not identically zero); this is called peak interpolation.
In particular, there exists $f \in A(\mathbb{D})$ with $f \big|_E = 1$ and $|f(z)| < 1$
for $z \in \overline{\mathbb{D}} \setminus E$, meaning that $E$ is peak set
for $A(\mathbb{D})$. Conversely, every peak set and every interpolation set has Lebesgue measure
zero. For background on this material, see \cite[Chapter II]{Gamelin69}.

In the theory of the classical Dirichlet space
\begin{equation*}
  \mathcal{D} = \Big\{ f \in \mathcal{O}(\mathbb{D}): \int_{\mathbb{D}} |f'|^2 \, dA < \infty \Big\},
\end{equation*}
where $A$ denotes the planar Lebesgue measure,
a frequently used notion of smallness of subsets of $\partial \mathbb{D}$ is that of having
logarithmic capacity zero; see \cite[Chapter II]{EKM+14} for an introduction.
This notion is particularly important in the potential theoretic approach to the Dirichlet space.
We will review the definition in Section \ref{sec:prelim}.
A theorem of Beurling shows that every function in $\mathcal{D}$ has radial limits outside
of a subset of $\partial \mathbb{D}$ of (outer) logarithmic capacity zero; see \cite[Section 3.2]{EKM+14}.
In the context of boundary interpolation, Peller and Khrushch\"{e}v \cite{PK82} showed
that a compact set $E \subset \partial \mathbb{D}$ is an interpolation set
for $A(\mathbb{D}) \cap \mathcal{D}$ if and only if $E$ has logarithmic capacity zero.
Many of these considerations have been extended to standard weighted Dirichlet spaces
and their associated capacities, and more generally to Hardy--Sobolev spaces
on the Euclidean unit ball $\mathbb{B}_d$ of $\mathbb{C}^d$ by Cohn \cite{Cohn89}
and by Cohn and Verbitsky \cite{CV95}.

The Hardy space $H^2$, the Dirichlet space $\mathcal{D}$ and more generally
Hardy--Sobolev spaces on the ball belong to a large class
of reproducing kernel Hilbert spaces of holomorphic functions on the ball,
called regular unitarily invariant spaces. We will recall the precise definition
in Section \ref{sec:prelim}.

In studying regular unitarily invariant spaces $\mathcal{H}$ and especially their multipliers,
a functional analytic smallness condition of subsets of $\partial \mathbb{B}_d$
has proved to be very useful in recent years.
This smallness condition has its roots in the study of the ball algebra
\[
A(\mathbb B_d) = \{ f \in C( \overline{\mathbb B_d}) : f \big|_{\mathbb B_d} \in \mathcal O(\mathbb B_d) \}
\]
as explained in Rudin's book \cite[Chapter 10]{Rudin08}.
We let
\begin{equation*}
  \Mult(\mathcal{H}) = \{ \varphi: \mathbb{B}_d \to \mathbb{C}: \varphi \cdot f \in \mathcal{H}
  \text{ whenever } f \in \mathcal{H} \}
\end{equation*}
denote the multiplier algebra of $\mathcal{H}$.
If $\varphi \in \Mult(\mathcal{H})$, its multiplier norm $\|\varphi\|_{\Mult(\mathcal{H})}$
is the norm of the multiplication operator $f \mapsto \varphi \cdot f$ on $\mathcal{H}$.
A complex regular Borel measure $\mu$ on $\partial \mathbb{B}_d$ is said to be \emph{$\Mult(\mathcal{H})$-Henkin}
if whenever $(p_n)$ is a sequence of polynomials satisfying $\|p_n\|_{\Mult(\mathcal{H})} \le 1$ for all $n \in \mathbb{N}$
and $\lim_{n \to \infty} p_n(z) = 0$ for all $z \in \mathbb{B}_d$, we have
\begin{equation*}
  \lim_{n \to \infty} \int_{\partial \mathbb{B}_d} p_n \, d \mu = 0.
\end{equation*}
A Borel subset $E \subset \partial \mathbb{B}_d$ is said to be \emph{$\Mult(\mathcal{H})$-totally null}
if $|\mu|(E) = 0$ for all $\Mult(\mathcal{H})$-Henkin measures $\mu$.
The Henkin condition can be rephrased in terms of a weak-$*$ continuity property;
see Section \ref{sec:prelim} for this reformulation and for more background.

In the case of $H^2$, the F.\ and M.\ Riesz theorem implies that a measure is Henkin if and only if it is absolutely continuous
with respect to Lebesgue measure on $\partial \mathbb{D}$. Hence the totally
null sets are simply the sets of Lebesgue measure $0$.
Beyond the ball algebra, these notions were first studied
by Clou\^atre and Davidson for the Drury--Arveson space \cite{CD16}, and then for more
general regular unitarily invariant spaces by Bickel, M\textsuperscript{c}Carthy and the second named author \cite{BHM17}.
Just as in the case of the ball algebra, Henkin measures and totally null sets appear naturally
when studying the dual space of algebras of multipliers \cite{CD16,DH20}, ideals of multipliers \cite{CD18,DH20},
functional calculi \cite{BHM17,CD16a}, and peak interpolation problems for multipliers \cite{CD16,DH20}.

\subsection{Main results}
In this article, we will compare the functional
analytic notion of being totally null with the potential theoretic notion of having capacity zero.
As was pointed out in \cite{DH20}, for the Dirichlet space $\mathcal{D}$,
the energy characterization of logarithmic capacity easily implies that every compact subset of $\partial \mathbb{D}$ that is $\Mult(\mathcal{D})$-totally
null necessarily has logarithmic capacity zero.
We will show that for Hardy--Sobolev spaces on the ball, including the Dirichlet space on the disc, the two notions of smallness in fact agree.

To state the result, let us recall the definition of Hardy--Sobolev spaces
(a.k.a.\ Besov--Sobolev spaces) on the ball.
Let $\sigma$ denote the normalized surface measure on $\partial \bB_d$
and let
\[
\Hardys = \Big\{f \in \cO(\bB_d) : \sup_{0 \le r < 1} \int_{\partial \bB_d} |f( r \zeta)|^2 d \sigma(\zeta) < \infty \Big\}
\]
be the Hardy space on the unit ball. Let $s \in \bR$.
If $f \in \mathcal{O}(\mathbb{B}_d)$ has a homogeneous decomposition $ f= \sum_{n=0}^\infty f_n$, we let
\begin{equation*}
  \|f\|_s^2 = \sum_{n=0}^\infty (n+1)^{2 s} \|f_n\|^2_{\Hardys}
\end{equation*}
and define
\begin{equation*}
  \mathcal{H}_s = \{ f \in \mathcal{O}(\mathbb{B}_d): \|f\|_s < \infty \}.
\end{equation*}
Thus, if $R^s f = \sum_{n=1}^\infty n^s f_n$ denotes the fractional radial derivative, then
\begin{equation*}
  \mathcal{H}_s = \{f \in \mathcal{O}(\mathbb{B}_d): R^s f \in \Hardys \}.
\end{equation*}
There are also natural $L^p$-versions of these spaces, but we will exclusively work in the Hilbert space setting.
If $s < 0$, then $\mathcal{H}_s$ is a weighted Bergman space on the ball,
and clearly $\mathcal{H}_0 = \Hardys$.
If $d=1$, then $\mathcal{H}_{1/2} = \mathcal{D}$, the classical Dirichlet space on the disc, and if $0 < s < \frac{1}{2}$,
then the spaces $\mathcal{H}_s$ are the standard weighted Dirichlet spaces on the disc.
For $d \ge 1$, the space  $\mathcal{H}_{d/2}$ is sometimes called the Dirichlet space on the ball,
and the spaces $\mathcal{H}_{s}$ for $ \frac{d-1}{2} < s < \frac{d}{2}$ are multivariable versions
of the standard weighted Dirichlet spaces on the disc.
If $s > \frac{d}{2}$, then every function in $\mathcal{H}_s$ extends to a continuous function on $\ol{\mathbb{B}_d}$.

In the range $s \le \frac{d}{2}$, there is a different description of $\mathcal{H}_s$,
which is also sometimes used as the definition.
For $a > 0$, let
\begin{equation*}
  K_a(z,w) = \frac{1}{(1 - \langle z,w \rangle)^a }
\end{equation*}
and
\begin{equation*}
  K_0(z,w) = \log \Big( \frac{e}{1 - \langle z,w \rangle} \Big),
\end{equation*}
and let $\mathcal{D}_a$ denote the reproducing kernel Hilbert space on $\mathbb{B}_d$
with kernel $K_a$.
It is well known that if $a = d - 2s$, then $\mathcal{D}_a = \mathcal{H}_s$, with equivalence of norms.
(This follows by expanding $K_a$ in a power series and comparing the coefficients with
$\|z_1^n\|_{\mathcal H_s}^{-2}$ with the help of Stirlings' formula and the known formula
for $\|z_1^n\|_{\Hardys}$; see \cite[Proposition 1.4.9]{Rudin08}).
The space $\mathcal{D}_1$ is usually called the \emph{Drury--Arveson space $H^2_d$} and plays
a key role in multivariable operator theory \cite{Arveson98} and in the
theory of complete Pick spaces \cite{AM02}.
For more background on these spaces, we refer the reader to \cite{ZZ08}.

For each of the spaces $\mathcal{H}_s$ for $\frac{d-1}{2} < s \le \frac{d}{2}$,
there is a natural notion of (non-isotropic Bessel) capacity $C_{s,2}(\cdot)$,
introduced by Ahern and Cohn \cite{AC89}.
Equivalently, for the spaces $\mathcal D_a$, there is a notion of capacity
that can be defined in terms of the reproducing kernel of $\mathcal D_a$.
We will review these definitions and show their equivalence in Section \ref{sec:prelim}.

Our main result concerning the Hardy--Sobolev spaces $\mathcal H_s$ is the following.

\begin{thm}
  \label{thm:main_dirichlet}
  Let $d \in \mathbb{N}$ and let $\frac{d-1}{2} < s \le \frac{d}{2}$.
  A compact subset $E \subset \partial \mathbb{B}_d$ is $\Mult(\mathcal{H}_s)$-totally null
  if and only if $C_{s,2}(E) = 0$.
\end{thm}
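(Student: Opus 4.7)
Write $a = d-2s \in [0,1)$, so that $\mathcal H_s = \mathcal D_a$ with reproducing kernel $K_a$. I would base the proof on the following standard link between Henkin measures and finite-energy measures: for a positive Borel measure $\mu$ on $\partial\mathbb B_d$ of finite $(s,2)$-energy, its Cauchy transform $C\mu(z) = \int K_a(z,\zeta)\,d\mu(\zeta)$ belongs to $\mathcal D_a$ and satisfies the reproducing identity $\int p\,d\mu = \langle p, C\mu\rangle_{\mathcal D_a}$ for every polynomial $p$.

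For the easy direction (totally null $\Rightarrow$ capacity zero), essentially the argument of \cite{DH20} for the classical Dirichlet space should generalize: if $C_{s,2}(E)>0$, choose a non-zero positive measure $\mu$ supported on $E$ with finite energy; the reproducing identity, combined with the standard RKHS observation that any sequence of polynomials $(p_n)$ with $\|p_n\|_{\Mult(\mathcal H_s)}\le 1$ and $p_n\to 0$ pointwise on $\mathbb B_d$ is weakly null in $\mathcal D_a$, shows that $\int p_n\,d\mu\to 0$. Thus $\mu$ is $\Mult(\mathcal H_s)$-Henkin, and $\mu(E)>0$ prevents $E$ from being totally null.

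For the main direction (capacity zero $\Rightarrow$ totally null), let $\mu$ be a $\Mult(\mathcal H_s)$-Henkin measure. The band structure of Henkin measures from \cite{CD16, BHM17} reduces the problem to showing that any positive Henkin measure $\mu$ supported on $E$ must vanish. The strategy is to prove that $\mu$ has finite $(s,2)$-energy, which by the defining property of $C_{s,2}$ then forces $\mu=0$. The Henkin hypothesis provides a weak-$*$ continuous extension of $p\mapsto\int p\,d\mu$ to $\Mult(\mathcal H_s)$, and via the standard description of the predual of $\Mult(\mathcal H_s)$ such a functional admits a representation $\varphi\mapsto\sum_n\langle \varphi h_n,k_n\rangle_{\mathcal H_s}$ with $\sum_n\|h_n\|\,\|k_n\|<\infty$. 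I would combine this representation with the reproducing identity, tested against suitable polynomial approximations of $K_a(\cdot,z)$, to deduce the bound $\|C\mu\|_{\mathcal D_a}<\infty$.

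The main obstacle lies in closing this last estimate. The predual representation a priori controls the functional only in the $\Mult(\mathcal H_s)$-norm, whereas we need control in the (typically strictly smaller) $\mathcal D_a$-norm; a direct Cauchy--Schwarz application does not close the gap. Bridging it should exploit the specific structure of $\mathcal D_a$ for $a\in[0,1)$, most likely through the complete Pick property together with explicit estimates on the multiplier norms of Taylor or Cesaro sums of $K_a(\cdot,z)$. An alternative route, still involving the same kernel estimates, would be to bypass the direct energy bound and show instead that every positive $\Mult(\mathcal H_s)$-Henkin measure lies in the band generated by finite-energy measures, from which the conclusion is immediate.
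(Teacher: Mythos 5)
Your easy direction (totally null $\Rightarrow$ capacity zero) is essentially the paper's Proposition~\ref{prop:only_if}: a finite-energy positive measure supported on $E$ induces a bounded functional on $\mathcal D_a$ by Proposition~\ref{prop:energy_fa}, hence a weak-$*$ continuous (Henkin) functional on $\Mult(\mathcal D_a)$, so $E$ cannot be totally null. That part is correct.

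In the hard direction, the gap you flag is a genuine one, and your proposal does not close it. Proving that a positive $\Mult(\mathcal H_s)$-Henkin measure $\mu$ supported on $E$ has finite $(s,2)$-energy asks for strictly more than the hypotheses give you: by Proposition~\ref{prop:energy_fa}, finite energy of $\mu$ is equivalent to the holomorphic potential $f_\mu$ lying in $\mathcal D_a$, i.e.\ to $\rho_\mu$ being \emph{bounded on $\mathcal D_a$}. The Henkin hypothesis only yields a weak-$*$ continuous functional on $\Mult(\mathcal D_a)$, hence a trace-class representation $\rho_\mu(\varphi) = \sum_n \langle \varphi h_n, k_n\rangle_{\mathcal D_a}$ with $\sum_n \|h_n\|\,\|k_n\| < \infty$; there is no general way to convert such a sum into a single vector in $\mathcal D_a$, and Cauchy--Schwarz on the multiplier side leaves you in the multiplier norm, not the $\mathcal D_a$-norm, exactly as you observe. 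The paper does not establish --- and one should not expect --- that Henkin implies finite energy, so this route as stated is a dead end.

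What the paper does instead is to construct good test \emph{multipliers} and pair them against $\mu$, sidestepping any energy estimate on $\mu$ entirely. Two ingredients are missing from your outline. First, Lemma~\ref{lem:potential_basic}: taking a decreasing sequence of compact neighborhoods $E_n$ of $E$ with $\capac{E_n}{\mathcal D_a}\to 0$ (upper semi-continuity of capacity), the holomorphic potentials $g^{(n)} = f_{\mu_n}$ of the capacitary extremal measures $\mu_n$ of $E_n$ are uniformly bounded on $\mathbb{B}_d$, satisfy $\|g^{(n)}\|_{\mathcal D_a}^2 \lesssim \capac{E_n}{\mathcal D_a} \to 0$, and have $\liminf_{r\nearrow 1} \Re g^{(n)}(r\zeta) \gtrsim 1$ for all $\zeta \in E$. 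Second --- and this is the real crux --- Proposition~\ref{prop:multiplier_norm}: a \emph{bounded} holomorphic potential is automatically a multiplier of $\mathcal D_a$ with $\|f_\mu\|_{\Mult(\mathcal D_a)} \approx \|f_\mu\|_\infty$, proved via the Sarason-function criterion of \cite{AHM+17c} (Theorem~\ref{thm:Sarason_function}). Consequently $(g^{(n)})$ is bounded in multiplier norm and weak-$*$ null in $\Mult(\mathcal D_a)$, yet its real part stays $\gtrsim 1$ on radial approach to $E$; Fatou's lemma and weak-$*$ continuity of $\rho_\mu$ then give $\mu(E)=0$. The multiplier-norm estimate for bounded potentials is the decisive fact your proposal lacks; it is what replaces the unavailable energy bound on $\mu$.
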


In particular, taking $d=1$ and $s=\frac{1}{2}$, we see that in the context of the classical
Dirichlet space $\mathcal D$, a compact subset $E \subset \partial \mathbb D$ is
$\Mult(\mathcal D)$-totally null if and only if it has logarithmic capacity zero.

A direct proof of Theorem \ref{thm:main_dirichlet} will be provided in Section \ref{sec:Dirichlet}.

Moreover, we will prove an abstract result about totally null sets,
which, in combination with work on exceptional sets by Ahern and Cohn \cite{AC89} and by Cohn and Verbitsky \cite{CV95},
will yield a second proof of Theorem \ref{thm:main_dirichlet}.
This result applies to some spaces that are not covered by Theorem \ref{thm:main_dirichlet}, such as the Drury--Arveson space.

It is possible to interpret the capacity zero condition as a condition involving the reproducing
kernel Hilbert space $\mathcal{H}$ (cf.\ Proposition \ref{prop:energy_fa} below), whereas the totally null condition is a condition
on the multiplier algebra $\Mult(\mathcal{H})$.
Complete Pick spaces form a class of spaces in which it is frequently possible
to go back and forth between $\mathcal{H}$ and $\Mult(\mathcal{H})$;
see the book \cite{AM02} and Section \ref{sec:prelim} of the present article for more background.
For now, let us simply mention that the spaces $\mathcal{D}_a$ for $0 \le a \le 1$ are complete Pick spaces.
(For $a=0$, one needs to pass to a suitable equivalent norm.)

If $\cH$ is a reproducing kernel Hilbert space on $\mathbb B_d$, let us say that a compact subset $E \subset \partial \mathbb{B}_d$ is an \emph{unboundedness set for $\mathcal{H}$} 
if there exists $f \in \mathcal{H}$ so that $\lim_{r \nearrow 1} |f(r \zeta)| = \infty$ for all $\zeta \in E$.
The following result covers the spaces in Theorem \ref{thm:main_dirichlet}, but it also
applies, for example, to the Drury--Arveson space, which corresponds to the end point $s = \frac{d-1}{2}$.

\begin{thm}
  \label{thm:main_CNP}
  Let $\mathcal{H}$ be a regular unitarily invariant complete Pick space on $\mathbb{B}_d$.
  A compact set $E \subset \partial \mathbb{B}_d$ is an unboundedness set for $\mathcal{H}$
  if and only if $E$ is $\Mult(\mathcal{H})$-totally null.
\end{thm}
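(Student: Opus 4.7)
The plan is to prove the two implications separately, relying on the Davidson--Hartz peak interpolation theorem on one side and a Fatou-type theorem for Henkin measures on the other.

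\smallskip
\textbf{Unboundedness set implies totally null.} Suppose $f \in \mathcal{H}$ witnesses that $E$ is an unboundedness set, and let $\mu$ be an arbitrary $\Mult(\mathcal{H})$-Henkin measure; I need $|\mu|(E) = 0$. The key ingredient is a Fatou-type theorem for Henkin measures in regular unitarily invariant complete Pick spaces: every $g \in \mathcal{H}$ admits finite radial boundary values $|\mu|$-almost everywhere on $\partial \mathbb{B}_d$. Such a statement interpolates between the classical Fatou theorem for $H^2$ (Henkin measures are absolutely continuous) and Beurling's theorem for the Dirichlet space (exceptional set of capacity zero); in the CNP setting it can be extracted from the weak-$*$ characterisation of Henkin measures together with a Cauchy/Herglotz realisation afforded by the complete Pick property. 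Applied to $g = f$, the set where the radial limit is infinite is $|\mu|$-null, hence so is $E$. As $\mu$ was arbitrary, $E$ is totally null.

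\smallskip
\textbf{Totally null implies unboundedness set.} Suppose $E$ is $\Mult(\mathcal{H})$-totally null. The first step is to invoke the Davidson--Hartz peak interpolation theorem \cite{DH20} to produce, for any prescribed continuous boundary data on $E$, an element of $A(\mathcal{H}) := \Mult(\mathcal{H}) \cap C(\overline{\mathbb{B}_d})$ realising the data, with good quantitative control on the multiplier norm and the decay off $E$. Iterating this, I would construct a sequence of peak multipliers $\varphi_n \in A(\mathcal{H})$ satisfying $\varphi_n|_E \equiv 1$, $\|\varphi_n\|_{\Mult(\mathcal{H})} \le 1$, and $|\varphi_n|$ rapidly decaying off a shrinking sequence of open neighbourhoods $U_n$ of $E$. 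Then I would assemble a series $f = \sum_n c_n \varphi_n$ with coefficients arranged so that the partial sums diverge to $+\infty$ along every radius into $E$ while converging locally uniformly off $E$. Provided $f$ lies in $\mathcal{H}$, this shows that $E$ is an unboundedness set.

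\smallskip
\textbf{Main obstacle.} The substantive difficulty is in the second direction, specifically in forcing the series $f = \sum c_n \varphi_n$ to lie in $\mathcal{H}$. Because $\varphi_n|_E \equiv 1$, reproducing-kernel / boundary-evaluation considerations bound $\|\varphi_n\|_{\mathcal{H}}$ from below, so absolute convergence in $\mathcal{H}$ is incompatible with $\sum c_n = \infty$ on $E$. This is where I expect the complete Pick property to enter essentially. One route is to arrange the peak neighbourhoods $U_n$ so that the $\varphi_n$'s become ``almost orthogonal'' in a suitable sense, and then to leverage the Agler--McCarthy realisation of CNP multipliers to control $\|\sum c_n \varphi_n\|_{\mathcal{H}}^2$ by a geometric factor depending on the concentrations, rather than by $\sum |c_n|\cdot\|\varphi_n\|_{\mathcal{H}}$. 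A cleaner alternative would be to reduce to a single peak multiplier $\varphi \in A(\mathcal{H})$ with $\varphi|_E \equiv 1$ and $\operatorname{Re}\varphi < 1$ off $E$, and then to show that a suitable function of $\varphi$, say $(1-\varphi)^{-\alpha}$ for small $\alpha > 0$ or $\log(1/(1-\varphi))$, already belongs to $\mathcal{H}$; the blow-up on $E$ is then automatic, and the membership in $\mathcal{H}$ should follow from a Herglotz-type representation of functions with positive real part together with a coefficient computation in the unitarily invariant norm.
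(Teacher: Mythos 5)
You have identified the right high-level structure (the Davidson--Hartz peak interpolation theorem for one direction, and a Fatou-type argument for the other), but both directions have genuine gaps, and the key technical tool that makes the proof work is missing.

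For the direction \emph{unboundedness implies totally null}, you appeal to a ``Fatou-type theorem for Henkin measures'' asserting that every $g \in \mathcal H$ has finite radial limits $|\mu|$-a.e.\ for every Henkin $\mu$, and you suggest this ``can be extracted from a Cauchy/Herglotz realisation.'' This is not a standard result; it is essentially a repackaging of the very implication you are trying to prove, so asserting it as a black box leaves the argument circular. What the paper does instead is apply the factorization theorem for complete Pick spaces \cite[Theorem~1.1]{AHM+17c}: any $f$ in the unit ball of $\mathcal H$ (or $\mathcal H \otimes \mathcal E$) can be written as $f = \Phi/(1-\psi)$ with $\Phi, \psi$ contractive multipliers and $|\psi|<1$ on $\mathbb B_d$. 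Since $\|\Phi\| \le 1$, blow-up of $f$ along a radius forces $\psi(r\zeta) \to 1$. One then shows $\mu(E) = 0$ directly: for a positive Henkin $\mu$ supported on $E$, dominated convergence plus Lemma~\ref{lem:dilations_convergence} give $\mu(E) = \rho_\mu(\psi^n)$ for every $n$, and $\psi^n \to 0$ weak-$*$ because $|\psi|<1$. So the ``Herglotz realisation'' you gesture at is precisely this quotient factorization, but you need to use it concretely; merely invoking a Fatou theorem does not supply the argument.

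For the direction \emph{totally null implies unboundedness}, you correctly diagnose the obstacle — naively summing peak multipliers $\varphi_n$ with $\varphi_n|_E \equiv 1$ cannot converge in $\mathcal H$ if $\sum c_n = \infty$ — and you correctly guess that the resolution should pass through a transform of a single peak multiplier using the complete Pick property. But the specific mechanism is missing. The paper does not take $(1-\varphi)^{-\alpha}$ or $\log(1/(1-\varphi))$; it uses the other half of the factorization theorem: if the column $\begin{bmatrix} b \\ a \end{bmatrix}$ is a contractive multiplier into $\mathcal H \oplus \mathcal H$, then $b/(1-a)$ lies in the closed unit ball of $\mathcal H$. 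To make that column contractive, one needs $\eta(0) = 0$ in addition to $\eta|_E = (1-\varepsilon^2)^{1/2}$ and $\|\eta\|_{\Mult} \le 1$; this is exactly why the paper invokes the \emph{simultaneous Pick and peak} interpolation theorem \cite[Theorem~1.5]{DH20}, not just the peak interpolation theorem. Taking the constant column entry to be $\varepsilon$ and $a = (1-\varepsilon^2)^{1/2}\eta$, the function $f = \varepsilon/(1-(1-\varepsilon^2)^{1/2}\eta)$ satisfies $\|f\|_{\mathcal H}\le 1$, $\Re f \ge 0$, and $f|_E = 1/\varepsilon$. Rescaling and choosing $\varepsilon_n = 2^{-n}$ produces $f_n$ with $f_n|_E = 1$, $\Re f_n \ge 0$, and $\|f_n\|_{\mathcal H} \le 2^{-n}$, so $f = \sum f_n$ converges in $\mathcal H$ and blows up radially on $E$. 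Note this also shows your heuristic that ``$\varphi_n|_E \equiv 1$ bounds $\|\varphi_n\|_{\mathcal H}$ from below'' is misleading for an $\mathcal H$-function (as opposed to a normalized peak multiplier): the only genuine lower bound from reproducing kernels in the interior is through $f_n(0)$, which here equals $2^{-n}$.

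In short: the decisive ingredient you are missing in both directions is the quotient characterization $\mathcal H = \{b/(1-a)\}$ for complete Pick spaces from \cite{AHM+17c}, together with the need for the joint Pick-and-peak interpolation (rather than plain peak interpolation) to control $\eta(0)$.
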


A refinement of this result will be proved in Section \ref{sec:main_CNP}.
The results of Ahern and Cohn \cite{AC89} and of Cohn and Verbitsky \cite{CV95} on exceptional
sets show that in the case of the spaces $\mathcal{H}_s$ for $\frac{d-1}{2} < s \le \frac{d}{2}$,
a compact subset $E \subset \partial \mathbb B_d$ is an unboundedness set for $\mathcal H_s$
if and only if $C_{s,2}(E) = 0$. Indeed,
the ``only if'' part follows from \cite[Theorem B]{AC89},
the ``if'' part is contained in the construction on p.\ 443 of \cite{AC89}; see also \cite[p. 94]{CV95}.
Thus, we obtain another proof of Theorem \ref{thm:main_dirichlet}.

\subsection{Applications}
We close the introduction by mentioning some applications of Theorem \ref{thm:main_dirichlet}.
The first application concerns peak interpolation.
Extending the work of Peller and Khrushch\"{e}v \cite{PK82} on boundary interpolation in the Dirichlet space,
Cohn and Verbitsky \cite[Theorem 3]{CV95} showed that every compact subset $E \subset \partial \mathbb{B}_d$
with $C_{s,2}(E) = 0$ is a strong boundary interpolation set for $\mathcal{H}_s \cap A(\mathbb{B}_d)$.
This means that for every $g \in C(E)$, there exists $f \in \mathcal{H}_s \cap A(\mathbb{B}_d)$ with
$f \big|_E = g$ and $\max( \|f\|_{\mathcal{H}_s}, \|f\|_{A(\mathbb{B}_d)}) \le \|f\|_{C(E)}$.
Combining Theorem \ref{thm:main_dirichlet} with a peak interpolation result for totally
null sets of Davidson and the second named author \cite{DH20}, we can strengthen the result of Cohn and Verbitsky
in two ways. Firstly, we replace $\mathcal{H}_s \cap A(\mathbb{B}_d)$ with the smaller space $A(\mathcal{H}_s)$,
which is defined to be the multiplier norm closure of the polynomials in $\Mult(\mathcal{H}_s)$. Thus,
\begin{equation*}
  A(\mathcal{H}_s) \subset \Mult(\mathcal{H}_s) \cap A(\mathbb{B}_d) \subset \mathcal{H}_s \cap A(\mathbb{B}_d)
\end{equation*}
with contractive inclusions. Secondly, we obtain a strict pointwise inequality off of $E$.

\begin{thm}
  \label{thm:peak_interpolation}
  Let $d \in \mathbb{N}$, let $\frac{d-1}{2} < s \le \frac{d}{2}$ and let $E \subset \partial \mathbb{B}_d$ be compact with $C_{s,2}(E) = 0$. Then for each $g \in C(E) \setminus \{0\}$, there exists
  $f \in A(\mathcal{H}_s)$ so that
  \begin{enumerate}
    \item $f \big|_E = g$,
    \item $|f(z)| < \|g\|_\infty$ for every $z \in \overline{\mathbb{B}_d} \setminus E$, and
    \item $\|f\|_{\Mult(\mathcal{H}_s)} = \|g\|_\infty$.
  \end{enumerate}
\end{thm}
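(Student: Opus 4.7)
The plan is to reduce Theorem \ref{thm:peak_interpolation} to a formal combination of two inputs: Theorem \ref{thm:main_dirichlet} from the present article, and the abstract peak interpolation theorem for multiplier algebras of Davidson and the second named author in \cite{DH20}. The latter is formulated precisely for $\Mult(\mathcal{H})$-totally null sets and yields all three of the desired conclusions (1)--(3) once such a set is supplied; Theorem \ref{thm:main_dirichlet} is exactly the bridge that lets us feed a set of vanishing capacity into it.

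In more detail, the first step is to invoke Theorem \ref{thm:main_dirichlet}. Since $E \subset \partial \mathbb{B}_d$ is compact and $C_{s,2}(E) = 0$, and since $\frac{d-1}{2} < s \le \frac{d}{2}$, the theorem asserts that $E$ is $\Mult(\mathcal{H}_s)$-totally null. The second step is to apply the peak interpolation result from \cite{DH20} to the totally null set $E$ and the nonzero function $g \in C(E)$. That result is formulated in exactly the right generality: to a regular unitarily invariant reproducing kernel Hilbert space $\mathcal{H}$ on $\mathbb{B}_d$, a $\Mult(\mathcal{H})$-totally null compact set $E \subset \partial \mathbb{B}_d$, and a nonzero $g \in C(E)$, it associates an element $f \in A(\mathcal{H})$ satisfying $f|_E = g$, the strict pointwise inequality $|f(z)| < \|g\|_\infty$ on $\overline{\mathbb{B}_d} \setminus E$, and the sharp multiplier norm identity $\|f\|_{\Mult(\mathcal{H})} = \|g\|_\infty$. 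Taking $\mathcal{H} = \mathcal{H}_s$ delivers conclusions (1), (2) and (3) simultaneously.

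All the analytic difficulty therefore lies upstream of this argument, in Theorem \ref{thm:main_dirichlet} itself, whose proof occupies a subsequent section. The only thing to check here, beyond the two citations, is that $\mathcal{H}_s$ for $\frac{d-1}{2} < s \le \frac{d}{2}$ belongs to the class of spaces to which the peak interpolation theorem of \cite{DH20} is applicable, i.e.\ that it is a regular unitarily invariant space on $\mathbb{B}_d$; this is immediate from the definition of $\mathcal{H}_s$ via homogeneous expansions (equivalently, from its identification with $\mathcal{D}_{d-2s}$ through the kernel $K_{d-2s}$). In this sense Theorem \ref{thm:peak_interpolation} is a clean corollary: the potential theoretic input $C_{s,2}(E) = 0$ is converted, via Theorem \ref{thm:main_dirichlet}, into the functional analytic hypothesis that makes the existing peak interpolation machinery directly applicable and sharp.
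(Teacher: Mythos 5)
Your proposal is correct and matches the paper's proof exactly: combine Theorem \ref{thm:main_dirichlet} (capacity zero implies totally null) with the peak interpolation theorem for regular unitarily invariant spaces, which in the paper is cited as \cite[Theorem 1.4]{DH20}. Your additional remark that $\mathcal{H}_s$ is indeed a regular unitarily invariant space is a fair sanity check and is established in the paper's preliminaries.
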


\begin{proof}
  According to \cite[Theorem 1.4]{DH20}, the conclusion holds when $\mathcal{H}_s$ is replaced with any
  regular unitarily invariant space $\mathcal{H}$ and $E$ is $\Mult(\mathcal{H})$-totally null.
  Combined with Theorem \ref{thm:main_dirichlet}, the result follows.
\end{proof}

In fact, in the setting of Theorem \ref{thm:peak_interpolation},
there exists an isometric linear operator $L: C(E) \to A(\mathcal H_s)$ of
peak interpolation; see \cite[Theorem 8.3]{DH20}.
In a similar fashion, one can now apply other results of \cite{DH20}
in the context of the spaces $\mathcal{H}_s$,
replacing the totally null condition with the capacity zero condition.
In particular, this yields a joint Pick and peak interpolation result (cf.\ \cite[Theorem 1.5]{DH20})
and a result about boundary interpolation in the context of interpolation sequences (cf. \cite[Theorem 6.6]{DH20}).

Our second application concerns cyclic functions. Recall that a function $f \in \mathcal{H}_s$ is said to be \emph{cyclic}
if the space of polynomial multiples of $f$ in dense in $\mathcal{H}_s$.
It is a theorem of Brown and Cohn \cite{BC85} that if $E \subset \partial \mathbb{D}$ has logarithmic
capacity zero, then there exists a function $f \in \mathcal{D} \cap A(\mathbb{D})$ that is cyclic
for $\mathcal{D}$ so that $f \big|_E = 0$; see also \cite{EL19} for an extension to other Dirichlet type
spaces on the disc.
The following result extends the theorem of Brown and Cohn to the spaces $\mathcal{H}_s$ on the ball,
and moreover achieves that $f \in A(\mathcal{H}_s)$, so in particular, $f$ is a multiplier.

\begin{cor}
  Let $d \in \mathbb{N}$, let $\frac{d-1}{2} < s \le \frac{d}{2}$ and let $E \subset \partial \mathbb{B}_d$
  be compact with $C_{s,2}(E)= 0$. Then there exists $f \in A(\mathcal{H}_s)$
  that is cyclic for $\mathcal H_s$
  so that $E = \{z \in \overline{\mathbb{B}_d}: f(z) = 0 \}$.
\end{cor}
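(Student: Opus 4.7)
The plan is to construct $f$ as $1-h$, where $h$ is the peaking function for $E$ produced by Theorem \ref{thm:peak_interpolation}. The zero-set requirement will be immediate from peak interpolation; the nontrivial content is cyclicity, which I would derive from a weak-closure argument in $\mathcal{H}_s$.

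First, I apply Theorem \ref{thm:peak_interpolation} to the constant function $g \equiv 1 \in C(E) \setminus \{0\}$ to obtain $h \in A(\mathcal{H}_s)$ with $h|_E = 1$, $|h(z)| < 1$ for every $z \in \overline{\mathbb{B}_d} \setminus E$, and $\|h\|_{\Mult(\mathcal{H}_s)} = 1$. Set $f = 1 - h$. Since $A(\mathcal{H}_s)$ is an algebra containing the constants, $f \in A(\mathcal{H}_s)$; and $f(z) = 0$ precisely when $h(z) = 1$, which by construction happens exactly on $E$. So the zero set of $f$ in $\overline{\mathbb{B}_d}$ is $E$.

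For cyclicity, let $\mathcal{N} \subset \mathcal{H}_s$ be the $\mathcal{H}_s$-closure of the polynomial multiples of $f$. Because polynomials are multiplier-norm dense in $A(\mathcal{H}_s)$ and the estimate $\|\varphi g\|_{\mathcal{H}_s} \le \|\varphi\|_{\Mult(\mathcal{H}_s)} \|g\|_{\mathcal{H}_s}$ holds, $\mathcal{N}$ is invariant under multiplication by every element of $A(\mathcal{H}_s)$. Applying this to $\varphi = 1 + h + \cdots + h^{n-1} \in A(\mathcal{H}_s)$ and the telescoping identity
\[
1 - h^n = f \cdot (1 + h + h^2 + \cdots + h^{n-1}),
\]
we get $1 - h^n \in \mathcal{N}$ for every $n \in \mathbb{N}$. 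Now $\|h^n\|_{\mathcal{H}_s} \le \|h^n\|_{\Mult(\mathcal{H}_s)} \|1\|_{\mathcal{H}_s} \le \|1\|_{\mathcal{H}_s}$, so $(h^n)$ is bounded in $\mathcal{H}_s$. Any weak subsequential limit $g$ in $\mathcal{H}_s$ must satisfy $g(z) = \lim h^{n_k}(z) = 0$ for every $z \in \mathbb{B}_d$ (by the reproducing kernel property combined with $|h(z)|<1$), hence $g = 0$. Therefore $h^n \rightharpoonup 0$ weakly, so $1 - h^n \rightharpoonup 1$ weakly. Since $\mathcal{N}$ is norm closed, it is weakly closed, whence $1 \in \mathcal{N}$. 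Polynomial invariance of $\mathcal{N}$ then gives every polynomial, and density of polynomials in $\mathcal{H}_s$ yields $\mathcal{N} = \mathcal{H}_s$.

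The main obstacle is the cyclicity step; its crux is converting the uniform multiplier bound $\|h^n\|_{\Mult(\mathcal{H}_s)} \le 1$ and pointwise vanishing of $h^n$ on $\mathbb{B}_d$ into weak convergence in $\mathcal{H}_s$, so that the weak closedness of the norm-closed $A(\mathcal{H}_s)$-submodule $\mathcal{N}$ captures the constant $1$.
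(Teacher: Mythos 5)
Your proposal is correct and follows exactly the paper's approach: apply Theorem \ref{thm:peak_interpolation} with $g \equiv 1$ to get $h$, set $f = 1 - h$, and deduce cyclicity from $\|h\|_{\Mult(\mathcal{H}_s)} \le 1$. The only difference is that the paper outsources the cyclicity step to \cite[Lemma 2.3]{AHM+17a}, while you spell out the telescoping/weak-convergence argument that underlies that lemma, which is a valid and complete way to prove it.
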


\begin{proof}
  Applying Theorem \ref{thm:peak_interpolation} to the constant function $g = 1$, we find $h \in A(\mathcal{H}_s)$
  so that $h \big|_E = 1, |h(z)| < 1$ for $z \in \overline{\mathbb{B}_d} \setminus E$ and $\|h\|_{\Mult(\mathcal{H}_s)} = 1$.
  Set $f = 1 - h$. Clearly, $f$ vanishes precisely on $E$. The fact that $\|h\|_{\Mult(\mathcal{H}_s)} \le 1$ easily
  implies that $f$ is cyclic; see for instance \cite[Lemma 2.3]{AHM+17a} and its proof.
\end{proof}

\section{Preliminaries}
\label{sec:prelim}

\subsection{Regular unitarily invariant spaces and totally null sets}

Throughout, let $d \in \mathbb N$.
A \emph{regular unitarily invariant space} is a reproducing kernel Hilbert space $\mathcal{H}$
on $\mathbb{B}_d$ whose reproducing kernel is of the form
\begin{equation}
\label{eqn:unit_inv_kernel}
  K(z,w) = \sum_{n=0}^\infty a_n \langle z,w \rangle^n,
\end{equation}
where $a_0 = 1$, $a_n > 0$ for all $n \in \mathbb{N}$ and $\lim_{n \to \infty} \frac{a_n}{a_{n+1}} = 1$.
We think of the last condition as a regularity condition, as it is natural
to assume that the power series defining $K$ has radius of convergence $1$, since $\mathcal{H}$
is a space of functions on the ball of radius $1$. Under this assumption, the limit, if it exists,
necessarily equals $1$. We recover $H^2$ and $\mathcal{D}$ by choosing $d=1$
and $a_n = 1$, respectively $a_n = \frac{1}{n+1}$, for all $n \in \mathbb{N}$.
Expanding the reproducing kernels of $\cD_a$ into a power series, one easily checks that $\cD_a$ is a regular unitarily
invariant space for all $a \ge 0$.
While the class of regular unitarily invariant spaces is not stable under passing to an equivalent norm,
one can also check that the spaces $\cH_s$ are regular unitarily invariant spaces for all $s \in \bR$.
Indeed, each space $\mathcal H_s$ has a reproducing kernel as in \eqref{eqn:unit_inv_kernel}, where
\begin{equation*}
    a_n = \|z_1^n\|_{\mathcal H_s}^{-2}.
\end{equation*}
More background on these spaces can be found in \cite{BHM17,DH20,GHX04}.

Let $\mathcal{H}$ be a regular unitarily invariant space. We let $\Mult(\mathcal{H})$ denote the multiplier algebra of $\mathcal{H}$. Identifying a multiplier $\varphi$ with the corresponding multiplication operator on $\mathcal{H}$,
we can regard $\Mult(\mathcal{H})$ as a WOT closed subalgebra of $\mathcal{B}(\mathcal{H})$, the algebra
of all bounded linear operators on $\mathcal{H}$. By trace duality, $\Mult(\mathcal{H})$ becomes
a dual space in this way, and hence is equipped with a weak-$*$ topology.
The density of the linear span of kernel functions in $\mathcal{H}$ implies that on bounded
subsets of $\Mult(\mathcal{H})$, the weak-$*$ topology agrees with the topology
of pointwise convergence on $\mathbb{B}_d$.
In a few places, we will use the following basic and well known fact, which we state as a lemma for easier reference.
For a proof, see for instance \cite[Lemma 2.2]{DH20}.

\begin{lem}
  \label{lem:dilations_convergence}
  Let $\mathcal{H}$ be a regular unitarily invariant space and let $\varphi \in \Mult(\mathcal{H})$.
  Let $\varphi_r(z) = \varphi(r z)$ for $0 \le r \le 1$ and $z \in \mathbb{B}_d$.
  Then $\|\varphi_r\|_{\Mult(\mathcal{H})} \le \|\varphi\|_{\Mult(\mathcal{H})}$
  for all $0 \le r \le 1$ and $\lim_{r \nearrow 1} \varphi_r = \varphi$ in the weak-$*$ topology
  of $\Mult(\mathcal{H})$.
\end{lem}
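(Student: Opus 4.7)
The statement has two parts---the norm bound $\|\varphi_r\|_{\Mult(\mathcal{H})} \le \|\varphi\|_{\Mult(\mathcal{H})}$ and the weak-$*$ convergence $\varphi_r \to \varphi$---and my plan is to establish the norm bound by averaging and then read off the convergence from the property of the weak-$*$ topology recalled just before the lemma.

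For the norm bound, the key observation is that the unit circle $\mathbb{T}$ acts unitarily on $\mathcal{H}$ through the composition operators $W_\lambda f(z) = f(\lambda z)$, $|\lambda|=1$. The unitary invariance of the kernel in~\eqref{eqn:unit_inv_kernel} forces homogeneous polynomials of distinct degrees to be orthogonal in $\mathcal{H}$, so $W_\lambda$ acts as multiplication by $\lambda^n$ on the $n$-th homogeneous subspace and is visibly unitary; a one-line calculation then gives $W_\lambda M_\varphi W_\lambda^* = M_{\varphi_\lambda}$, where $\varphi_\lambda(z) = \varphi(\lambda z)$, so that $\|\varphi_\lambda\|_{\Mult(\mathcal{H})} = \|\varphi\|_{\Mult(\mathcal{H})}$ for every $\lambda \in \mathbb{T}$.

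To pass from unimodular $\lambda$ to real $0 \le r < 1$ I would exhibit $\varphi_r$ as a convex average of the $\varphi_\lambda$ against the Poisson kernel. Writing $P_r(\theta) = \frac{1-r^2}{|1-re^{i\theta}|^2}$ and using the Fourier coefficients $\frac{1}{2\pi}\int_0^{2\pi} e^{-in\theta} P_r(\theta)\, d\theta = r^{|n|}$ termwise in the homogeneous decomposition $\varphi = \sum_{n} p_n$ yields
\begin{equation*}
  \varphi_r(z) \;=\; \frac{1}{2\pi} \int_0^{2\pi} \varphi(e^{-i\theta} z)\, P_r(\theta)\, d\theta, \qquad z \in \mathbb{B}_d.
\end{equation*}
Strong continuity of the representation $\lambda \mapsto W_\lambda$ makes $\theta \mapsto M_{\varphi_{e^{-i\theta}}} f$ a continuous $\mathcal{H}$-valued map for each $f \in \mathcal{H}$, so the corresponding Bochner integral $\frac{1}{2\pi}\int_0^{2\pi} M_{\varphi_{e^{-i\theta}}} f\cdot P_r(\theta)\,d\theta$ exists in $\mathcal{H}$; pairing it with a reproducing kernel $K(\cdot,w)$ and using the displayed identity at $z=w$ verifies that it equals $M_{\varphi_r} f$. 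Since $\frac{1}{2\pi}P_r(\theta)\,d\theta$ is a probability measure and $\|M_{\varphi_{e^{-i\theta}}} f\| \le \|\varphi\|_{\Mult(\mathcal{H})}\|f\|$ for every $\theta$, the triangle inequality for Bochner integrals gives $\|M_{\varphi_r} f\| \le \|\varphi\|_{\Mult(\mathcal{H})}\|f\|$, which is the desired estimate.

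For the convergence, the norm bound just proved shows that $\{\varphi_r : 0 \le r \le 1\}$ is bounded in $\Mult(\mathcal{H})$, while $\varphi_r(z) = \varphi(rz) \to \varphi(z)$ for every $z \in \mathbb{B}_d$ by continuity of $\varphi$ on $\mathbb{B}_d$. The paragraph immediately preceding the lemma notes that on bounded subsets of $\Mult(\mathcal{H})$ the weak-$*$ topology coincides with pointwise convergence on $\mathbb{B}_d$, so $\varphi_r \to \varphi$ weak-$*$. The only genuinely delicate point is to verify that the Poisson-integral representation of $M_{\varphi_r}$ actually holds in $\mathcal{H}$, not merely pointwise; everything else is bookkeeping.
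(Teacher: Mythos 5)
Your proof is correct and follows essentially the same route as the cited reference [DH20, Lemma 2.2] (the paper itself gives no proof and just cites it): the norm bound comes from the unitary circle action together with Poisson averaging $\varphi_r(z) = \frac{1}{2\pi}\int_0^{2\pi}\varphi(e^{-i\theta}z)P_r(\theta)\,d\theta$, and the weak-$*$ convergence then follows from boundedness plus pointwise convergence and the remark preceding the lemma. All the delicate points you flag (strong continuity of $\lambda\mapsto W_\lambda$, existence of the Bochner integral, identifying it with $M_{\varphi_r}f$ by pairing against kernels) are handled correctly.
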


Let $M(\partial \mathbb{B}_d)$ be the space of complex regular Borel measures on $\partial \mathbb{B}_d$.

\begin{defn}
\label{defn:Henkin_TN}
  Let $\mathcal{H}$ be a regular unitarily invariant space.
  \begin{enumerate}[label=\normalfont{(\alph*)}]
      \item A measure $\mu \in M( \partial \mathbb{B}_d)$ is said to be $\Mult(\mathcal{H})$-Henkin if the functional
\begin{equation*}
  \Mult(\mathcal{H}) \supset \mathbb{C}[z_1,\ldots,z_d] \to \mathbb{C},
  \quad p \mapsto \int_{\partial \mathbb{B}_d} p \, d \mu,
\end{equation*}
extends to a weak-$*$ continuous functional on $\Mult(\mathcal{H})$.
\item A Borel subset $E \subset \partial \mathbb{B}_d$ is said to be $\Mult(\mathcal{H})$-totally null
if $|\mu|(E) = 0$ for all $\Mult(\mathcal{H})$-Henkin measures $\mu$.
  \end{enumerate}
\end{defn}

By \cite[Lemma 3.1]{BHM17}, the definition of Henkin measure given here
is equivalent to the one given in the introduction in terms of sequences of polynomials converging pointwise to zero.
The set of $\Mult(\mathcal{H})$-Henkin measures forms a band (see \cite[Lemma 3.3]{BHM17}), meaning in particular
that $\mu$ is $\Mult(\mathcal{H})$-Henkin if and only if $|\mu|$ is Henkin.
This band property implies that a compact set $E$ is $\Mult(\mathcal{H})$-totally
null if and only if $\mu(E) = 0$ for every positive $\Mult(\mathcal{H})$-Henkin measure
$\mu$ that is supported on $E$; see \cite[Lemma 2.5]{DH20}.

Finally, we require the notion of a \emph{complete Pick space}.
Complete Pick spaces are reproducing kernel Hilbert spaces that are
defined in terms of an interpolation condition for multipliers; see the book \cite{AM02}
for more background.
In the context of regular unitarily invariant spaces, there is a concrete
characterization in terms of the reproducing kernel. If the reproducing
kernel of $\mathcal{H}$ is $K(z,w) = \sum_{n=0}^\infty a_n \langle z,w \rangle^n$,
then $\mathcal{H}$ is a complete Pick space if and only if the sequence $(b_n)_{n=1}^\infty$
defined by the power series identity
\begin{equation*}
  \sum_{n=1}^\infty b_n t^n = 1 - \frac{1}{\sum_{n=0}^\infty a_n t^n}
\end{equation*}
satisfies $b_n \ge 0$ for all $n \in \mathbb{N}$ (this is a straightforward generalization of \cite[Theorem 7.33]{AM02}). In particular, the spaces $\mathcal{D}_a$ are complete Pick spaces in the range $0 \le a \le 1$ ; cf.\ \cite[Lemma 7.38]{AM02}. (For $a=0$, one needs to pass for instance to the equivalent norm induced by the reproducing
kernel $\frac{1}{\langle z, w \rangle} \log\big( \frac{1}{1 - \langle z,w \rangle} \big)$.)

\subsection{Capacity}

When defining capacity for (compact) sets $E \subset \partial \bB_d$ induced by the  Hardy-Sobolev spaces $\cH_s$, there are at least two possible approaches.
Each one can be viewed as natural, depending on the perspective,
and in fact we require both approaches in the proof of Theorem \ref{thm:main_dirichlet}, one for each implication.
The two definitions turn out to be equivalent in the sense that the capacities defined are comparable with absolute constants. In particular, capacity zero sets coincide in both senses. We shall briefly discuss the equivalence of the two definitions.

The first definition, introduced in \cite[p.\ 489]{AC89}, is motivated by the fact that the spaces $\cH_s$ can be understood as potential spaces and
fits into the framework of the general potential theory of Adams and Hedberg \cite{Adams1996}.
Let $M^+(\partial \mathbb{B}_d)$ denote the set of positive regular Borel measures on $\partial \mathbb{B}_d$.
We let $\sigma$ be the normalized surface measure on $\partial \mathbb B_d$.
If $E \subset \partial \mathbb B_d$ is compact, let $M^+(E)$ be the set of all
measures in $M^+(\partial \mathbb B_d)$ that are supported on $E$.
For $0 \le s < d$, consider the kernel
\[
k_s(z,w) = \frac{1}{|1 - \langle z,w \rangle|^{d-s}} \quad (z,w \in \overline{\bB_d})
\]
and also set
\[
k_d(z,w) = \log \frac{e}{|1 - \langle z,w \rangle|} \quad (z,w \in \overline{\bB_d}).
\]

\begin{defn}
Let $0 \le s \le d$, let $\mu \in M^+(\partial \mathbb B_d)$ and let $E \subset \partial \mathbb B_d$ be compact.
  \begin{enumerate}[label=\normalfont{(\alph*)}]
  \item The \emph{non-isotropic Riesz potential} of $\mu$ is
\[ \cI_s(\mu)(z) = 
\int_{\partial \bB_d} k_s(z,w) d \mu(w) \quad (z \in \partial \mathbb B_d).\]
We extend the definition to non-negative measurable functions $f \in L^1(\partial \bB_d, d\sigma ) $ by letting 
$\cI_s(f) = \cI_s(f \, d\sigma )$.
\item The \emph{non-isotropic Bessel capacity} of $E$ is defined by
\[ C_{s,2}(E)= \inf \{ \norm f ^2_{L^2(\partial \bB_d , d\sigma )} : \cI_s(f) \geq 1 \,\,\, \text{on} \,\,\, E, f \geq 0 \}. \]
\item The quantity $\|\cI_s(\mu)\|_{L^2(\partial \bB_d, d \sigma)}^2 \in [0,\infty]$ is called the \emph{energy} of $\mu$.
  \end{enumerate}
\end{defn}

By \cite[Theorem 2.5.1]{Adams1996}, we have the following ``dual'' expression for the capacity $C_{s,2}(\cdot)$, 
\begin{equation}
\label{eqn:dual}
     C_{s,2}(E)^{1/2} = \sup \{ \mu(E): \mu \in M^+(E), \,\,\, \norm { \cI_s(\mu) }_{L^2(\partial \bB_d, d\sigma)} \leq 1  \}.
\end{equation}
In particular, $C_{s,2}(E) > 0$ if and only if $E$ supports a probability measure of finite energy.

A different approach, which can be justified by regarding $\cH_s$ as a reproducing kernel Hilbert space, is the following; cf. \cite[Chapter 2]{EKM+14}.
Recall that if $a=d-2s$, then $\mathcal H_s = \mathcal D_a$ with equivalent norms. Moreover, we have $k_{2s} = |K_a|$.

\begin{defn}
    Let $\frac{d-1}{2} < s \le \frac{d}{2}$, let $a= d- 2 s$, let $\mu \in M^+(\partial \mathbb{B}_d)$ and let $E \subset \partial \mathbb{B}_d$ be compact.
  \begin{enumerate}[label=\normalfont{(\alph*)}]
    \item The $\mathcal D_a$-\emph{potential} of $\mu$ is
      \begin{equation*}
        \cI_{2s}(\mu)(z) = \int_{\partial \mathbb B_d} |K_a(z,w)| \, d \mu  (w).
      \end{equation*}
    \item The $\mathcal D_a$-\emph{energy}  of $\mu$ is defined by
      \begin{equation*}
        \mathcal{E}(\mu, \mathcal D_a)
        = \int_{\partial \bB_d} \int_{\partial \bB_d} |K_a(z,w)| d \mu(z) d \mu(w).
      \end{equation*}
    \item The $\mathcal D_a$-\emph{capacity} of $E$ is defined by
      \begin{equation*}
        \capac{E}{\mathcal D_a}^{1/2} =  \sup \{ \mu(E): \mu \in M^+(E) , \,\, \mathcal{E}(\mu, \mathcal D_a) \leq 1 \}.
      \end{equation*}
  \end{enumerate}
\end{defn}

As for the Bessel capacity, $\capac{E}{\mathcal{D}_a}> 0$ if and only if $E$ supports a probability measure of finite energy.
The formula $d(z,w) = | 1 - \langle z, w \rangle|^{1/2}$ defines a metric on $\partial \mathbb B_d$,
called the \emph{Koranyi metric}; see \cite[Proposition 5.1.2]{Rudin08}.
Thus, the capacities $\capac{\cdot}{\mathcal D_a}$
fit into the framework of capacities on compact metric spaces developed in \cite[Chapter 2]{EKM+14}.

It appears to be well known to experts that the capacities $\capac{\cdot}{\mathcal D_a}$ and $C_{s,2}(\cdot)$ are equivalent
if $a = d- 2s$,
the point being that the corresponding energies are comparable.
A proof in the case $d=1, s=\frac{1}{2}$ can be found in \cite[Lemma 2.2]{Cascante2012}.
In the case $s \neq \frac{d}{2}$, the crucial estimate is stated in \cite[Remark 2.1]{CO95} without proof.
A proof of the estimate in one direction in this case is contained in \cite[p.\ 442-442]{AC89}.
For the sake of completeness, we provide an argument that applies to all cases under consideration.
We adapt the proof in \cite[Lemma 2.2]{Cascante2012} to the non-isotropic geometry of $\partial \bB_d$.

Throughout, we write $A \lesssim B$ to mean that there exists a constant $C \in (0,\infty)$ so that $A \le C B$,
and $A \approx B$ to mean that $A \lesssim B$ and $B \lesssim A$.

\begin{lem}\label{lem:equiv_riesz_pot}
  Let $\frac{d-1}{2} < s \leq \frac{d}{2}$ and $\mu \in M^+(\partial \bB_d ) $. Then
  \[  \cI_s(\cI_s(\mu)) \approx \cI_{2s}(\mu), \]
  where the implied constants only depend on $s$ and $d$.
\end{lem}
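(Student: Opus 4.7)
By Fubini,
\begin{equation*}
\mathcal{I}_s(\mathcal{I}_s(\mu))(z) = \int_{\partial \mathbb{B}_d} J(z,w) \, d\mu(w), \qquad J(z,w) := \int_{\partial \mathbb{B}_d} k_s(z,\zeta) \, k_s(\zeta,w) \, d\sigma(\zeta),
\end{equation*}
so it suffices to establish the pointwise kernel estimate $J(z,w) \approx k_{2s}(z,w)$ uniformly in $z,w \in \partial \mathbb{B}_d$, with constants depending only on $d$ and $s$. This is a classical Forelli--Rudin type integral estimate, with the endpoint $s = d/2$ responsible for the logarithmic kernel $k_d$.

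The natural framework is the Koranyi metric $d(z,w) = |1-\langle z,w\rangle|^{1/2}$, in which $k_s(z,w) = d(z,w)^{-2(d-s)}$ and the surface measure satisfies $\sigma(B(z,r)) \approx r^{2d}$. I fix $z,w \in \partial \mathbb{B}_d$, set $\rho = d(z,w)$, and split $\partial \mathbb{B}_d$ into the ``near $w$'' ball $R_1 = B(w,\rho/2)$, the ``near $z$'' ball $R_2 = B(z,\rho/2)$, and the ``far'' region $R_3 = \partial \mathbb{B}_d \setminus (R_1 \cup R_2)$. On $R_1$ the triangle inequality gives $d(z,\zeta) \approx \rho$, reducing the contribution to $\rho^{-2(d-s)} \int_{R_1} d(\zeta,w)^{-2(d-s)}\, d\sigma(\zeta)$, which a dyadic decomposition of $R_1$ into annuli centered at $w$ evaluates as $\approx \rho^{-2(d-2s)}$; the contribution from $R_2$ is symmetric. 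On $R_3$ I decompose into dyadic annuli $A_k = \{\zeta : 2^k\rho \le d(\zeta,w) < 2^{k+1}\rho\}$ on which both $d(z,\zeta), d(\zeta,w) \approx 2^k\rho$, so each annulus contributes $\approx (2^k\rho)^{-2(d-2s)}$. Summing over $k \ge 0$ with $2^k \rho \lesssim 1$ yields a convergent geometric series of value $\approx \rho^{-2(d-2s)} = k_{2s}(z,w)$ when $s < d/2$, and a sum of $\approx \log(1/\rho) \approx \log(e/|1-\langle z,w\rangle|) = k_d(z,w)$ constant terms when $s = d/2$. The matching lower bound follows by restricting to a single suitable annulus $A_k$ in $R_3$ (for $s < d/2$), or by retaining all of them (for $s = d/2$).

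The main obstacle is the uniform treatment of the endpoint case $s = d/2$, where the polynomial decay in $\rho$ degenerates and the logarithm arises only after correctly terminating the dyadic sum at the finite diameter of $\partial \mathbb{B}_d$. One must check that lower-order boundary errors at the outer cutoff $2^k\rho \approx 1$ do not destroy the sharp comparison with $\log(e/|1-\langle z,w\rangle|)$, so that a single estimate covers both the strict and endpoint cases. The remaining ingredients--the triangle inequality and volume estimate for the Koranyi metric, and the comparability $d(z,\zeta) \approx d(\zeta,w)$ on annuli far from both basepoints--are standard.
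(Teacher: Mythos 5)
Your proposal is correct and follows essentially the same route as the paper: reduce via Fubini to the pointwise kernel estimate $J(z,w) \approx k_{2s}(z,w)$, split $\partial\mathbb{B}_d$ into a ball near $z$, a ball near $w$, and the complement, and feed in the Koranyi-ball volume estimate $\sigma(B(z,r)) \approx r^{2d}$. The only cosmetic differences are that you integrate by summing over dyadic annuli while the paper uses the distribution-function representation of the integral (these are interchangeable), and your lower bound for $s < d/2$ comes from a single annulus in the far region whereas the paper reads it off from the near-$z$ ball; both are fine.
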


\begin{proof}

We will show that
\[
\int_{\partial \bB_d} k_s(z, \zeta) k_s(\zeta,w) d \sigma(\zeta) \approx k_{2s}(z,w) \quad (z,w \in \partial \bB_d).
\]
The statement then follows by integrating both sides with respect to $\mu$ and using Fubini's theorem.

Let
\[ d(z,w)=|1-\inner{z}{w}|^{\frac{1}{2}} \quad (z,w \in \partial \mathbb B_d) \]
be the Koranyi metric; see \cite[Proposition 5.1.2]{Rudin08}.
If $\delta>0$ we let $Q_\delta(z) = \{ \zeta \in \partial \bB_d: d(\zeta,z) \le \delta\}$ be the Koranyi ball centered at $z$ with radius $\delta.$
We will repeatedly use the fact that $\sigma(Q_{\delta}(z)) \approx \delta^{2 d}$ for $0 \le \delta \le \sqrt{2}$; see \cite[Proposition 5.1.4]{Rudin08}.

Now let $z,w\in \partial \bB_d$ and set $\delta=\frac{d(z,w)}{2}$. Then, in order to estimate the kernel
\[ \int_{\partial   \bB_d} \frac{d\sigma(\zeta)}{|1-\inner{z}{\zeta}|^{d-s}|1-\inner{\zeta}{w}|^{d-s}}
= \int_{\partial \bB_d} \frac{d \sigma(\zeta)}{d(z,\zeta)^{2(d-s)} d(\zeta,w)^{2(d-s)}},
\]
we split the domain of integration $\partial \bB_d$ as follows
\[ \partial \bB_d = Q_\delta(z) \cup Q_\delta(w) \cup  \{ d(\zeta,z) \leq d(\zeta,w) \} \setminus Q_\delta(z) \cup  \{ d(\zeta,w) \leq d(\zeta,z) \} \setminus Q_\delta(w).  \]
We denote by \MakeUppercase{\romannumeral 1}, \MakeUppercase{\romannumeral 1}$'$, \MakeUppercase{\romannumeral 2}, \MakeUppercase{\romannumeral 2}$'$ the corresponding integrals. By the symmetry of the problem it suffices to estimate \MakeUppercase{\romannumeral 1} and \MakeUppercase{\romannumeral 2}.

For \MakeUppercase{\romannumeral 1}, we note that if $\zeta \in Q_{\delta}(z)$, then $\delta \le d(\zeta,w) \le 3 \delta$
by the triangle inequality for $d$.
Hence, integrating with the help of the the distribution function, we find that
\begin{align*}
    \text{\MakeUppercase{\romannumeral 1}} & \approx \frac{1}{\delta^{2(d-s)}} \int_{Q_\delta(z)}\frac{d\sigma(\zeta)}{d(z,\zeta)^{2(d-s)}} \\
    &= \frac{1}{\delta^{2(d-s)}} \int_{0}^\infty \sigma( \{ \zeta \in Q_{\delta}(z) : d(z,\zeta) \le t^{\frac{-1}{2 (d-s)}} \}) dt\\
           & = \frac{\sigma ( Q_\delta(z) )}{\delta^{4(d-s)}}   +  \frac{1}{\delta^{2(d-s)}} \int_{\delta^{-2(d-s)}}^\infty \sigma ( \{ \zeta \in \partial \mathbb B_d:  d(z,\zeta) \leq t^{\frac{-1}{2(d-s)}} \} ) dt \\
           & \approx \delta^{-2(d-2s)} + \frac{1}{\delta^{2(d-s)}} \int_{\delta^{-2(d-s)}}^{\infty} t^{\frac{-d}{d-s}}dt \\
           & \approx \delta^{-2(d-2s)}.
\end{align*}

Next, using the fact that $d(z,\zeta) \le \sqrt{2}$ for all $z, \zeta \in \partial \bB_d$, we see that
\begin{align*}
    \textup{II} &\le \int_{\partial \bB_d \setminus Q_{\delta}(z)} \frac{d \sigma(\zeta)}{d(z,\zeta)^{4 (d-s)}} \\
     &= \int_{0}^\infty \sigma( \{ \zeta \in \partial \bB_d : \delta < d(z,\zeta) \le t^{\frac{-1}{4(d-s)}} \} ) dt\\
     &\lesssim 1 + \int_{2^{-2(d-s)}}^{\delta^{-4 (d-s)}} t^{\frac{-d}{2(d-s)}} dt \\
     &\lesssim
     \begin{cases}
        \delta^{-2(d-2s)}, & \text{ if } s < \frac{d}{2}, \\
        \log( \delta^{-2}), & \text { if } s = \frac{d}{2}.
     \end{cases}
\end{align*}
Combining the estimates for \textup{I} and \textup{II} and recalling the definition of $\delta$ we see that
\[
\int_{\partial   \bB_d} \frac{d\sigma(\zeta)}{|1-\inner{z}{\zeta}|^{d-s}|1-\inner{\zeta}{w}|^{d-s}}
\lesssim
\begin{cases}
\frac{1}{|1 - \langle z,w \rangle |^{d - 2 s}}, & \text{ if } s < \frac{d}{2} \\
\log\big( \frac{e}{|1 - \langle z,w \rangle|} \big), & \text{ if } s = \frac{d}{2}.
\end{cases}
\]

To establish the lower bound, it suffices to consider $z,w \in \partial \bB_d$ for which $d(z,w)$ is small.
In the case $s < \frac{d}{2}$, the lower bound follows from the treatment of the integral $\textup{I}$ above.
Let $s = \frac{d}{2}$.
Notice that in the region $ \cU_{z,w}  = \{ \zeta \in \partial \bB_d : d(z,w)  \leq d(w,\zeta) \} $,  the triangle inequality yields $d(\zeta,z) \leq 2d(\zeta,w)$.
Hence integrating again with the distribution function and writing $\delta  = d (z,w )$, we estimate
\begin{align*}
    \int_{\partial \bB_d} \frac{d\sigma(\zeta)}{d(\zeta,w)^dd(\zeta,z)^d} & \gtrsim \int_{ \cU_{z,w} } \frac{d\sigma(\zeta)}{d(\zeta,w)^{2d}} \\
    & = \int_0^{\delta^{-{2d}}} \sigma( \{ \zeta \in \partial \bB_d : \delta \leq  d(\zeta,w) \leq t^{\frac{-1}{2d}} \} ) dt \\
    & = \int_{0}^{\delta^{-{2d}}} \sigma ( Q_{t^{-\frac{1}{2d}}}(w) ) dt - \delta^{-{2d}} \sigma( Q_\delta(w) ) \\ 
    &   \geq c_0 \log ( \delta^{-1} ) - c_1,
\end{align*}
where $c_0,c_1 > 0$ are constants depending only on the dimension $d$. This shows the lower bound for small $\delta$, which concludes the proof.
\end{proof}

From this lemma the equivalence of the capacities $C_{s,2}(\cdot)$ and $\capac{\cdot}{\mathcal D_a}$ for $a = d - 2s$ follows easily.
\begin{cor}
\label{cor:energy_comparable}
  Let $\frac{d-1}{2} < s \leq \frac{d}{2}$, let $a = d - 2s$ and $\mu \in M^+(\partial \bB_d ) $. Then
  \[
  \|\cI_s(\mu)\|^2_{L^2(\partial \bB_d, d \sigma)} \approx \cE(\mu,\mathcal D_a).
  \]
  Hence $C_{s,2}(E) \approx \capac{E}{\cD_a}$ for compact subsets $E \subset \partial \bB_d$.
  Here, all implied constants only depend on $d$ and $s$.
\end{cor}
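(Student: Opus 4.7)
The corollary is a short deduction from Lemma \ref{lem:equiv_riesz_pot}, and my plan is to split it into two clean steps: first establish the two-sided estimate for the energies, and then transfer this to capacities using the two dual formulations.

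For the energy equivalence, I would start by rewriting the $L^2$-norm as a double integral and applying Fubini. Since $k_s(z,w) = k_s(w,z)$ and everything is non-negative, one gets
\begin{equation*}
    \|\cI_s(\mu)\|^2_{L^2(\partial \bB_d, d\sigma)}
    = \int_{\partial \bB_d} \cI_s(\mu)(z)^2 \, d\sigma(z)
    = \int_{\partial \bB_d} \cI_s(\cI_s(\mu))(w) \, d\mu(w),
\end{equation*}
after swapping the order of integration twice. Then Lemma \ref{lem:equiv_riesz_pot} gives $\cI_s(\cI_s(\mu))(w) \approx \cI_{2s}(\mu)(w)$ pointwise with constants depending only on $d$ and $s$, so integrating against $\mu$ yields
\begin{equation*}
    \|\cI_s(\mu)\|^2_{L^2(\partial \bB_d, d\sigma)}
    \approx \int_{\partial \bB_d} \cI_{2s}(\mu)(w) \, d\mu(w)
    = \int \int k_{2s}(z,w) \, d\mu(z) \, d\mu(w).
\end{equation*}
Recalling that $k_{2s} = |K_a|$ (up to a multiplicative constant in the endpoint case $s = d/2$, where both sides are comparable on $\partial \bB_d$ because $\mathrm{Re} \log\bigl(\tfrac{e}{1-\langle z,w\rangle}\bigr) = \log\bigl(\tfrac{e}{|1-\langle z,w\rangle|}\bigr)$ dominates the bounded imaginary part), the right-hand side is $\cE(\mu, \cD_a)$ up to absolute constants. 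This proves the first assertion, and it handles the case where either quantity is infinite as well.

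For the capacity equivalence, I would invoke the two dual formulations directly: equation \eqref{eqn:dual} for $C_{s,2}(E)$ and the definition of $\capac{E}{\cD_a}$. Both capacities are defined as suprema of $\mu(E)$ over probability-normalized subsets of $M^+(E)$, the only difference being the normalization constraint, which is $\|\cI_s(\mu)\|_{L^2(\sigma)} \le 1$ in one case and $\cE(\mu, \cD_a) \le 1$ in the other. Since these two quantities are comparable by the first part, rescaling $\mu$ by the square root of the comparison constant gives $C_{s,2}(E) \approx \capac{E}{\cD_a}$ with the same implicit constants.

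There is essentially no obstacle here beyond bookkeeping; the nontrivial content has already been absorbed into Lemma \ref{lem:equiv_riesz_pot}. The only minor point worth noting in the write-up is the endpoint $s = d/2$, where the identification $k_{2s} = |K_a|$ needs the brief remark above to relate $|K_0|$ to $k_d$ on the sphere.
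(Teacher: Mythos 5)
Your proof is correct and follows essentially the same route as the paper: expand $\|\cI_s(\mu)\|^2_{L^2}$ via Tonelli to get $\int \cI_s(\cI_s(\mu))\,d\mu$, apply Lemma \ref{lem:equiv_riesz_pot}, and transfer to capacities through the two dual formulations. Your parenthetical remark about the endpoint $s = d/2$ (where $k_d = \Re K_0$ is only comparable to $|K_0|$, not equal) is a small but genuine point that the paper's preamble glosses over with the unqualified claim $k_{2s} = |K_a|$; it is worth keeping.
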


\begin{proof}
For a measure $\mu \in M^+(\partial \bB_d) $, we compute
\begin{align*}
    \norm{\cI_s(\mu)}^2_{L^2(\partial \bB_d, d\sigma)}
    &  = \int_{\partial \bB_d} \Big( \int_{\partial \bB_d} \frac{d\mu(z)}{|1-\inner{z}{\zeta}|^{d-s}} \Big)^2 d\sigma \\
    &  = \int_{\partial \bB_d} \int_{\partial \bB_d}\int_{\partial   \bB_d} \frac{d\sigma(\zeta)}{|1-\inner{z}{\zeta}|^{d-s}|1-\inner{w}{\zeta}|^{d-s}}  d\mu(z)d\mu(w) \\ 
    &  = \int_{\partial \bB_d}  \cI_s(\cI_s(\mu)) d\mu.
\end{align*}
Thus, Lemma \ref{lem:equiv_riesz_pot} yields that
\[
    \norm{\cI_s(\mu)}^2_{L^2(\partial \bB_d, d\sigma)}
     \approx \int_{\partial \bB_d} \cI_{2s}(\mu)(w) d\mu(w)
    = \cE(\mu, \mathcal D_a).
\]
Since the energies involved are comparable, so are the capacities by \eqref{eqn:dual}.
\end{proof}

\section{Direct proof of Theorem \ref{thm:main_dirichlet}}
\label{sec:Dirichlet}

To prove Theorem \ref{thm:main_dirichlet}, we will make use of holomorphic potentials.
Since several of our proofs involve reproducing kernel arguments, it is slightly more convenient
to work with the spaces $\mathcal D_a$ rather than with $\mathcal H_s$.

\begin{defn}
  Let $0 \le a < 1$ and let $\mu \in M^+(\partial \mathbb{B}_d)$.
  The holomorphic potential of $\mu$ is the function
  \begin{equation*}
    f_\mu: \mathbb{B}_d \to \mathbb{C}, \quad z \mapsto \int_{\partial \mathbb{B}_d} K_a(z,w) \, d\mu(w).
  \end{equation*}
\end{defn}

Let $A(\mathbb{B}_d)$ denote the ball algebra.
If $\mu \in M^+(\partial \mathbb{B}_d)$, let
\begin{equation*}
  \rho_\mu: A(\mathbb{B}_d) \to \mathbb{C}, \quad f \mapsto \int_{\partial \mathbb{B}_d} f \, d \mu
\end{equation*}
denote the associated integration functional.

The following functional analytic interpretation of the holomorphic potential
and of capacity will show that every totally null set has capacity zero.
In the case of the Dirichlet space on the disc, it is closely related to the energy formula
for logarithmic capacity in terms of the Fourier coefficients of a measure; see for instance \cite[Theorem 2.4.4]{EKM+14}.

\begin{prop}
  \label{prop:energy_fa}
  Let $\mu \in M^+(\partial \mathbb{B}_d)$ and let $0 \le a < 1$. The following assertions are equivalent:
  \begin{enumerate}[label=\normalfont{(\roman*)}]
    \item $\mathcal{E}(\mu,\mathcal D_a) < \infty$,
    \item the densely defined functional $\rho_\mu$ is bounded on $\mathcal{D}_a$,
    \item $f_\mu \in \mathcal{D}_a$.
  \end{enumerate}
  In this case,
  \begin{equation*}
    \mathcal{E}(\mu,\mathcal D_a) \approx \|\rho_\mu\|^2_{(\mathcal{D}_a)^*} = \| f_\mu\|_{\mathcal{D}_a}^2,
  \end{equation*}
  where the implied constants only depend on $a$ and $d$,
  and
  \begin{equation*}
    \rho_\mu(g) = \langle g, f_\mu \rangle_{\mathcal{D}_a}
  \end{equation*}
  for all $g \in \mathcal{D}_a$.
\end{prop}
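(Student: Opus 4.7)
The plan is to split the three equivalences into two parts. First, I would establish (ii) $\iff$ (iii) together with the identification $\rho_\mu(g) = \langle g, f_\mu\rangle_{\mathcal D_a}$ and the exact norm equality $\|\rho_\mu\|^2 = \|f_\mu\|^2$ via Riesz representation. Second, I would prove the quantitative estimate $\mathcal E(\mu, \mathcal D_a) \approx \|f_\mu\|^2$ (which encodes (i) $\iff$ (iii)) by computing $\|f_\mu\|^2$ as a monotone limit of $K_a$-integrals and comparing with the modulus-integral defining the energy.

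The main device for the first part is the dilation $f_\mu^{(r)}(z) := f_\mu(rz) = \int_{\partial \mathbb B_d} K_a(z, rw)\,d\mu(w)$, which for $r \in [0,1)$ is a Bochner integral of interior reproducing kernels (since $\|K_a(\cdot, rw)\|^2 = K_a(rw, rw)$ is bounded uniformly in $w$) and hence lies in $\mathcal D_a$. The reproducing property then gives, for every polynomial $p$, the identity $\langle p, f_\mu^{(r)}\rangle = \int p(rw)\,d\mu(w)$, which tends to $\rho_\mu(p)$ as $r \to 1^-$ by uniform continuity of $p$ on $\overline{\mathbb B_d}$. Under (iii), the standard dilation convergence $f_\mu^{(r)} \to f_\mu$ in $\mathcal D_a$ (dilations act diagonally as contractions on the monomial basis of $\mathcal D_a$) forces $\rho_\mu(p) = \langle p, f_\mu\rangle$, yielding (ii) with exact norm equality. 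Conversely, (ii) and Riesz produce $F \in \mathcal D_a$ representing $\rho_\mu$; approximating $k_z$ for $z \in \mathbb B_d$ by its Taylor polynomials in both the $\mathcal D_a$-norm and the sup-norm on $\overline{\mathbb B_d}$ (valid since $|\langle w, z\rangle| \le |z| < 1$) gives $\overline{F(z)} = \rho_\mu(k_z) = \overline{f_\mu(z)}$, and hence $F = f_\mu \in \mathcal D_a$.

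For the second part, a further application of the reproducing property gives
\begin{equation*}
    \|f_\mu^{(r)}\|^2 = \iint K_a(rv, rw)\,d\mu(v)\,d\mu(w) = \sum_n c_n r^{2n} \iint \langle v, w\rangle^n\,d\mu(v)\,d\mu(w),
\end{equation*}
where $K_a(z,w) = \sum_n c_n \langle z, w\rangle^n$. Since the inner integrals equal $\sum_{|\alpha|=n}\binom{n}{\alpha}|\widehat{\mu}(\alpha)|^2 \ge 0$, the expression is monotone increasing in $r$ with limit $\|f_\mu\|^2$ (as read off from the Taylor expansion in the monomial basis of $\mathcal D_a$, with $+\infty$ if $f_\mu \notin \mathcal D_a$). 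The main obstacle is now to compare this monotone limit with $\mathcal E(\mu, \mathcal D_a) = \iint |K_a(v,w)|\,d\mu\,d\mu$, which means passing from complex-valued integrals to modulus integrals. Two ingredients would carry this out. First, since $1 - \langle z, w\rangle$ lies in the closed right half-plane for $z, w \in \overline{\mathbb B_d}$, writing it as $|u|e^{i\theta}$ with $|\theta| \le \pi/2$ yields $\operatorname{Re} K_a(z,w) \ge \cos(a\pi/2)|K_a(z,w)|$ for $a \in (0,1)$, with an analogous direct argument for $a = 0$; hence $\operatorname{Re} K_a \approx |K_a|$ uniformly on $\overline{\mathbb B_d}^2$. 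This is precisely where the hypothesis $a < 1$ is essential, as $\cos(\pi/2) = 0$. Second, the elementary bound $|1 - r^2\zeta| \ge \tfrac12 |1 - \zeta|$ for $|\zeta| \le 1$ and $r \in [0,1)$ gives $|K_a(rv, rw)| \le 2^a |K_a(v,w)|$ uniformly in $r$. Combining the resulting uniform upper bound $\iint |K_a(rv, rw)|\,d\mu\,d\mu \le 2^a \mathcal E$ with Fatou's lemma applied to the pointwise convergence $|K_a(rv, rw)| \to |K_a(v,w)|$ sandwiches $\lim_{r\to 1^-}\iint|K_a(rv, rw)|\,d\mu\,d\mu$ between $\mathcal E$ and $2^a \mathcal E$, and the comparability $\operatorname{Re} K_a \approx |K_a|$ then transforms the monotone limit of the display into the desired equivalence $\|f_\mu\|^2 \approx \mathcal E(\mu, \mathcal D_a)$.
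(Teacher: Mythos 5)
Your proposal is correct and follows essentially the same route as the paper: dilations $f_\mu^{(r)}$ and the reproducing property to link $\rho_\mu$ with $f_\mu$, the comparability $\Re K_a \approx |K_a|$ (using $a<1$), the basic bound $|1-r^2\langle v,w\rangle| \ge \tfrac12|1-\langle v,w\rangle|$, and Fatou/dominated convergence for the energy comparison. The only local variations are tactical: for (ii) $\Rightarrow$ (iii) you invoke Riesz representation plus Taylor-polynomial approximation of $k_z$ rather than the paper's shorter observation that $\|f_r\|_{\mathcal D_a} = \|\rho_r\|_{(\mathcal D_a)^*} \le \|\rho_\mu\|_{(\mathcal D_a)^*}$ uniformly in $r$, and you make the monotone convergence $\|f_\mu^{(r)}\|^2 \nearrow \|f_\mu\|^2$ explicit via nonnegativity of the Fourier-coefficient terms, which the paper leaves implicit.
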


\begin{proof}
  For ease of notation, we write $f = f_\mu$, $\rho = \rho_\mu$ and $k_w(z) = K_a(z,w)$.
  For $0 \le r < 1$, define $f_r(z) = f(rz)$ and $\rho_r(f) = \rho(f_r)$.
  Then each $f_r \in \mathcal{D}_a$ and each $\rho_r$ is a bounded functional on $\mathcal{D}_a$.
  First, we connect $f_r$ and $\rho_r$, which will be useful in all parts of the proof.
  By the reproducing property of the kernel, we find that
  \begin{equation*}
    \langle k_z, f_r \rangle = \overline{f(r z)} = \int_{\partial \mathbb{B}_d} k_{rz}(w) \, d \mu(w)
    = \int_{\partial \mathbb{B}_d} k_z(r w) \, d\mu (w)
    = \rho_r(k_z)
  \end{equation*}
  for all $z \in \mathbb{B}_d$. Since finite linear combinations of kernels are dense in $\mathcal{D}_a$,
  it follows that
  \begin{equation}
    \label{eqn:proof_energy_fa}
    \rho_r(g) =  \langle g, f_r \rangle
  \end{equation}
  for all $g \in \mathcal{D}_a$ and hence $\|\rho_r\|_{(\mathcal{D}_a)^*} = \|f_r\|_{\mathcal{D}_a}$.

  Next, we show the equivalence of (ii) and (iii).
  If $f \in \mathcal{D}_a$, then $\lim_{r \nearrow 1} f_r = f$ in $\mathcal{D}_a$ and hence for all $g \in A(\mathbb{B}_d) \cap \mathcal{D}_a$,
  Equation \eqref{eqn:proof_energy_fa} shows that
  \begin{equation*}
    \rho(g) = \lim_{r \nearrow 1} \rho_r(g) = \lim_{r \nearrow 1} \langle g, f_r \rangle =  \langle g,f \rangle,
  \end{equation*}
  so $\rho$ is bounded on $\mathcal{D}_a$. In this case, $\|\rho\|_{(\mathcal{D}_a)^*} = \|f\|_{\mathcal{D}_a}$,
  which establishes the final statement of the proposition as well.
  Conversely, if $\rho$ is bounded on $\mathcal{D}_a$, then since $\|\rho_r\|_{(\mathcal{D}_a)^*}
  \le \|\rho\|_{(\mathcal{D}_a)^*}$, it follows that $\sup_{0 \le r < 1} \|f_r\|_{\mathcal{D}_a} \le \|\rho\|_{(\mathcal{D}_a)^*}$,
  hence $f \in \mathcal{D}_a$.

  It remains to show the equivalence of (i) and (iii) and that $\mathcal{E}(\mu,\mathcal D_a) \approx \|f\|_{\mathcal{D}_a}^2$.
  With the help of Equation \eqref{eqn:proof_energy_fa}, we see that
  \begin{align*}
    \|f_r\|_{\mathcal{D}_a}^2 = \langle f_r, f_r \rangle = \rho_r(f_r)
    &= \int_{\partial \mathbb{B}_d} f(r^2 z) \, d \mu(z) \\
    &= \int_{\partial \mathbb{B}_d} \int_{\partial \mathbb{B}_d} K_a(rz,rw) \, d\mu(w) d \mu(z).
  \end{align*}
  Taking real parts and using the fact that $\Re K_a$ and $|K_a|$
  are comparable, we find that
  \begin{equation*}
    \|f_r\|_{\mathcal{D}_a}^2
    \approx \int_{\partial \mathbb{B}_d} \int_{\partial \mathbb{B}_d} |K_a(r z, rw)| d \mu(z) d \mu(w).
  \end{equation*}
  Thus, if $f \in \mathcal{D}_a$, then Fatou's lemma shows that
  \begin{equation*}
    \mathcal{E}(\mu,\mathcal D_a) = \int_{\partial \mathbb{B}_d} \int_{\partial \mathbb{B}_d} |K_a(z,w)| \, d \mu(z) d \mu(w) \lesssim \|f\|_{\mathcal{D}_a}^2.
  \end{equation*}
  Conversely, if $\mathcal{E}(\mu,\mathcal D_a) < \infty$, we use the basic inequality
  \begin{equation*}
    \Big| \frac{1}{1 - r^2 \langle z,w \rangle} \Big| \le 2 \Big| \frac{1}{ 1 - \langle z,w \rangle} \Big| \quad (z,w \in \mathbb{B}_d)
  \end{equation*}
  and the Lebesgue dominated convergence theorem to find that
  \begin{equation*}
    \lim_{r \nearrow 1} \|f_r\|_{\mathcal{D}_a}^2 \lesssim \mathcal{E}(\mu,\mathcal D_a),
  \end{equation*}
  so $f \in \mathcal{D}_a$ and $\|f\|_{\mathcal{D}}^2 \lesssim \mathcal{E}(\mu,\mathcal D_a)$.
\end{proof}

With this proposition in hand, we can prove the ``only if'' part of Theorem \ref{thm:main_dirichlet},
which we restate in equivalent form (see Corollary \ref{cor:energy_comparable} for the equivalence).
The idea is the same as that in the proof of \cite[Proposition 2.6]{DH20}.

\begin{prop}
\label{prop:only_if}
  Let $0 \le a < 1$ and let $E \subset \partial \mathbb{B}_d$ be compact.
  If $E$ is $\Mult(\mathcal{D}_a)$-totally null, then $\capac{E}{\mathcal{D}_a} = 0$.
\end{prop}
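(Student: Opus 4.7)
The plan is to argue by contrapositive. Assuming $\capac{E}{\mathcal{D}_a} > 0$, I produce a $\Mult(\mathcal{D}_a)$-Henkin measure that is supported on $E$ and has positive total mass, contradicting the hypothesis that $E$ is totally null.

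First, by the dual formulation of the capacity (the definition given in the paper), $\capac{E}{\mathcal{D}_a} > 0$ yields a measure $\mu \in M^+(E)$ with $\mu(E) > 0$ and finite $\mathcal{D}_a$-energy, $\mathcal{E}(\mu, \mathcal{D}_a) < \infty$. By Proposition \ref{prop:energy_fa}, the holomorphic potential $f_\mu$ belongs to $\mathcal{D}_a$ and represents the integration functional in the sense that
\begin{equation*}
  \rho_\mu(g) = \int_{\partial \mathbb{B}_d} g \, d\mu = \langle g, f_\mu \rangle_{\mathcal{D}_a} \qquad (g \in A(\mathbb{B}_d) \cap \mathcal{D}_a).
\end{equation*}
Since $\Mult(\mathcal{D}_a) \subset \mathcal{D}_a$ (apply a multiplier to the constant $1 \in \mathcal{D}_a$), the formula $\Lambda(\varphi) := \langle \varphi, f_\mu \rangle_{\mathcal{D}_a}$ defines a bounded linear functional on $\Mult(\mathcal{D}_a)$ that agrees with $p \mapsto \int p \, d\mu$ on polynomials.

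The remaining task is to show that $\Lambda$ is weak-$*$ continuous, which exhibits $\mu$ as $\Mult(\mathcal{D}_a)$-Henkin. By the Krein--Smulian theorem, it suffices to verify weak-$*$ continuity of $\Lambda$ on the closed unit ball of $\Mult(\mathcal{D}_a)$. So suppose $\varphi_n \to \varphi$ weak-$*$ with $\|\varphi_n\|_{\Mult(\mathcal{D}_a)} \le 1$. As noted in Section \ref{sec:prelim}, this is the same as bounded pointwise convergence on $\mathbb{B}_d$. Applying each multiplier to the constant $1$ gives $\|\varphi_n\|_{\mathcal{D}_a} \le \|1\|_{\mathcal{D}_a}$, so $(\varphi_n)$ is bounded in $\mathcal{D}_a$ and converges to $\varphi$ pointwise on $\mathbb{B}_d$; the standard RKHS argument (pointwise evaluation equals inner product against a kernel, and kernels span a dense subspace) then shows $\varphi_n \to \varphi$ weakly in $\mathcal{D}_a$. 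Consequently $\Lambda(\varphi_n) \to \Lambda(\varphi)$, as required.

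Thus $\mu$ is $\Mult(\mathcal{D}_a)$-Henkin, yet $|\mu|(E) = \mu(E) > 0$, contradicting the assumption that $E$ is $\Mult(\mathcal{D}_a)$-totally null. The main conceptual step is the conversion of the potential-theoretic datum (finite energy) into the functional-analytic datum (weak-$*$ continuity on $\Mult(\mathcal{D}_a)$), which is exactly what Proposition \ref{prop:energy_fa} is designed to deliver; the Krein--Smulian reduction and the bounded-pointwise-equals-weak fact in an RKHS then make the continuity step essentially automatic.
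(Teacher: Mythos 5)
Your proof is correct and follows the same approach as the paper: assume $\capac{E}{\mathcal{D}_a}>0$, extract a finite-energy measure $\mu\in M^+(E)$, invoke Proposition~\ref{prop:energy_fa} to see that $\rho_\mu$ is bounded on $\mathcal{D}_a$, and conclude that $\mu$ is $\Mult(\mathcal{D}_a)$-Henkin. The only difference is that you unpack the ``bounded on $\mathcal{D}_a$ implies weak-$*$ continuous on $\Mult(\mathcal{D}_a)$'' step via Krein--Smulian (which tacitly uses that the predual of $\Mult(\mathcal{D}_a)$ is separable, so balls are weak-$*$ metrizable), whereas one can see it even more directly by writing $\Lambda(\varphi)=\langle M_\varphi 1, f_\mu\rangle_{\mathcal{D}_a}$ and noting that this is induced by the rank-one trace-class operator $1\otimes f_\mu$, hence is manifestly weak-$*$ continuous on $\mathcal{B}(\mathcal{D}_a)$ and therefore on $\Mult(\mathcal{D}_a)$.
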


\begin{proof}
Suppose that $\capac{E}{\mathcal{D}_a} > 0$. Then $E$ supports a probability measure $\mu$ of finite energy $\mathcal E(\mu,\mathcal D_a)$.
  By Proposition \ref{prop:energy_fa}, we see that the integration functional $\rho_\mu$ is bounded
  on $\mathcal{D}_a$. In particular, it is weak-$*$ continuous on $\Mult(\mathcal{D}_a)$.
  Hence $E$ is not $\Mult(\mathcal{D}_a)$-totally null.
\end{proof}

To prove the converse, we require the following fundamental properties of the holomorphic potential of a capacitary extremal measure
of a compact subset $E \subset \mathbb B_d$, i.e. a measure for which the supremum in \eqref{eqn:dual} is achieved.
If $a> 0$, these properties are contained in the proof of \cite[Theorem 2.10]{AC89}, see also \cite[Lemma 2.3]{Cascante2012}
for a proof in the case $d=1$ and $a=0$.
An argument that directly works with the capacity $\capac{\cdot}{\mathcal D_0}$ in the case $d=1$ and $a=0$
can be found on pp.\ 40--41 of \cite{EKM+14}.
We briefly sketch the argument in general.
\begin{lem}
  \label{lem:potential_basic}
  Let $E \subset \partial \mathbb{B}_d$ be a compact set with $\capac{E}{\mathcal{D}_a} > 0$.
  There exists a positive measure $\mu$ supported on $E$ so that the corresponding holomorphic
  potential $f_\mu$ satisfies
  \begin{enumerate}[label=\normalfont{(\alph*)}]
    \item $f_\mu \in \mathcal D_a$ with $\|f_\mu\|^2_{\cD_a} \lesssim \capac{E}{\cD_a}$.
    \item $\liminf_{r \nearrow 1} \Re f_\mu(r \zeta) \gtrsim 1$ for all $\zeta \in \inter(E)$, and
    \item $|f_\mu(z)| \lesssim 1$ for all $z \in \mathbb{B}_d$.
  \end{enumerate}
  Here, the implied constants only depend on $a$ and $d$.
\end{lem}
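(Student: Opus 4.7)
The natural candidate is to take $\mu$ to be a suitably normalized capacitary equilibrium measure on $E$. My first step would be to establish the existence of an energy-minimizing probability measure $\nu$ supported on $E$: among all probability measures on $E$, find one minimizing $\mathcal{E}(\nu, \mathcal{D}_a)$. Existence follows from weak-$*$ compactness of the set of probability measures on $E$ together with lower semicontinuity of the energy (itself a consequence of the lower semicontinuity of $(z,w) \mapsto |K_a(z,w)|$), and the duality formula \eqref{eqn:dual} identifies the minimum value as $1/\capac{E}{\mathcal{D}_a}$. A standard variational perturbation argument (Frostman's theorem) then produces the two inequalities $\mathcal{I}_{2s}(\nu) \le \mathcal{E}(\nu, \mathcal{D}_a)$ on $\operatorname{supp}(\nu)$ and $\mathcal{I}_{2s}(\nu) \ge \mathcal{E}(\nu, \mathcal{D}_a)$ quasi-everywhere on $E$; since every topological interior point is a regular point for the kernel, the lower bound in fact holds at every point of $\inter(E)$. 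Applying the Maria--Frostman maximum principle to $\mathcal{I}_{2s}(\nu)$ then extends the upper bound from $\operatorname{supp}(\nu)$ to all of $\overline{\mathbb{B}_d}$, at the cost of a multiplicative constant.

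With these bounds in hand, I would rescale by setting $\mu := \capac{E}{\mathcal{D}_a} \cdot \nu$, so that $\mathcal{E}(\mu, \mathcal{D}_a) = \capac{E}{\mathcal{D}_a}$, $\mathcal{I}_{2s}(\mu)(\zeta) \gtrsim 1$ on $\inter(E)$, and $\mathcal{I}_{2s}(\mu)(z) \lesssim 1$ for all $z \in \overline{\mathbb{B}_d}$.

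From here the three conclusions follow quickly. Property (a) is immediate from Proposition \ref{prop:energy_fa}: $\|f_\mu\|^2_{\mathcal{D}_a} \approx \mathcal{E}(\mu, \mathcal{D}_a) = \capac{E}{\mathcal{D}_a}$. Property (c) is the trivial pointwise bound $|f_\mu(z)| \le \mathcal{I}_{2s}(\mu)(z) \lesssim 1$. For (b), the key elementary observation is that $\Re K_a(z,w) \gtrsim |K_a(z,w)|$ uniformly for $z,w \in \overline{\mathbb{B}_d}$: since $\Re(1 - \langle z,w\rangle) \ge 0$, we have $|\arg(1-\langle z,w\rangle)| \le \pi/2$, so $|\arg K_a(z,w)| \le a\pi/2 < \pi/2$ (the $a=0$ case is handled analogously via the logarithmic form). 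Hence $\Re f_\mu(r\zeta) \gtrsim \mathcal{I}_{2s}(\mu)(r\zeta)$, and Fatou's lemma applied to the lower semicontinuous kernel $|K_a|$ yields $\liminf_{r \nearrow 1} \mathcal{I}_{2s}(\mu)(r\zeta) \ge \mathcal{I}_{2s}(\mu)(\zeta) \gtrsim 1$ for each $\zeta \in \inter(E)$.

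The main technical obstacle is the potential-theoretic input: establishing both the Frostman equilibrium inequalities and the Maria--Frostman maximum principle for the kernel $|K_a|$ in the non-isotropic geometry of $(\partial \mathbb{B}_d, d)$ with the Koranyi metric. These are standard in the framework of capacities on compact metric spaces (see \cite[Chapter 2]{EKM+14} and \cite{Adams1996}), but verifying them in the present setting requires careful localization with Koranyi balls and the doubling property $\sigma(Q_\delta) \approx \delta^{2d}$ established in Lemma \ref{lem:equiv_riesz_pot}. The logarithmic case $a = 0$ requires only minor modifications to the argument for $a > 0$.
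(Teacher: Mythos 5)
Your proposal is structurally sound and reaches the same conclusions by a route that is conceptually parallel but genuinely different in a few places. The paper does not work with a $\mathcal D_a$-equilibrium measure and Frostman/Maria--Frostman theory for the kernel $|K_a|$ directly. Instead, it invokes the Adams--Hedberg capacitary distribution theorem \cite[Theorem~2.5.3]{Adams1996} together with the Ahern--Cohn maximum principle \cite[Lemma~1.15]{AC89} in the \emph{linear} Bessel framework (with the iterated potential $\mathcal I_s(\mathcal I_s(\mu))$), and then transports the three properties to the $\mathcal D_a$-setting via Lemma~\ref{lem:equiv_riesz_pot}. The advantage of that route is that the hard potential-theoretic ingredients are literally citable; your route requires reproving Frostman's equilibrium inequalities and a Maria--Frostman maximum principle for the non-linear energy kernel $|K_a|$ on $\partial\mathbb B_d$, which you correctly flag as ``the main technical obstacle'' but do not actually supply. (Maria--Frostman is not an automatic consequence of the general compact-metric-space framework; it is a property of the specific kernel, and in the non-isotropic ball geometry it is exactly what \cite{AC89} establish for the Riesz side. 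The cleanest fix is precisely the paper's move: translate to the Bessel side and cite.)

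Two more local comments. Your argument for (b) is a genuinely different and somewhat slicker alternative: you push the lower bound from quasi-everywhere to everywhere on $\inter(E)$ using lower semicontinuity of the potential, then apply Fatou radially. This works, but the interior regularity claim (``every topological interior point is a regular point'') is stated without justification; the one-line argument that closes it is that $\{\mathcal I_{2s}(\nu) < V\}$ is open by lower semicontinuity, and any nonempty open subset of $\partial\mathbb B_d$ has positive capacity (since $\sigma$ restricted to it has finite $\mathcal D_a$-energy), so this open set cannot meet $\inter(E)$. By contrast, the paper derives (b) from (c) via $H^\infty$ radial limits and the Poisson integral representation, which needs only the quasi-everywhere lower bound together with ``capacity zero implies $\sigma$-measure zero.'' Finally, your estimate $\Re K_a \gtrsim |K_a|$ and the rescaling $\mu = \capac{E}{\mathcal D_a}\,\nu$ are correct; for (c) you should also note (as the paper does) the elementary bound $|K_a(rz,w)|\le 2^a|K_a(z,w)|$ to pass from $z\in\partial\mathbb B_d$ to $z\in\mathbb B_d$, though this is a trivial point.
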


\begin{proof}
  Let $s = \frac{d-a}{2}$, so that $\mathcal H_s = \mathcal D_a$ with equivalent norms.
  The general theory of Bessel capacities (see \cite[Theorem 2.5.3]{Adams1996}), combined with the maximum principle
  for the capacity $C_{s,2}(\cdot)$ \cite[Lemma 1.15]{AC89} implies that there exists a positive measure $\mu$ supported on $E$ so that
  \begin{enumerate}[]
      \item $\mu(E) = \|\mathcal I_s(\mu)\|^2_{L^2(\partial \mathbb B_d, d \sigma)} = C_{s,2}(E)$;
      \item $\mathcal{I}_s(\mathcal I_s(\mu)) \ge 1$ on $E \setminus F$, where
      $F$ is a countable union of compact sets of $C_{s,2}$-capacity zero.
      \item $\mathcal{I}_s(\mathcal I_s(\mu)) \lesssim 1$ on $\partial \mathbb B_d$.
  \end{enumerate}
  (See also \cite[Corollary 2.4.3]{EKM+14} for an approach using $\capac{\cdot}{\mathcal D_0}$ in the case $d=1$ and $a=0$.)
  
  Item (1) and Corollary \ref{cor:energy_comparable} show that $\mathcal E(\mu, \mathcal D_a) \approx \|\mathcal I_{s}(\mu) \|^2_{L^2(\partial \bB_d, \sigma)} \approx \capac{E}{\mathcal D_a}$,
  hence Proposition \ref{prop:energy_fa} yields that (a) holds.
  
  Lemma \ref{lem:equiv_riesz_pot} and Item (3) show that for $z \in \partial \mathbb B_d$, we have
  \[
     \int_{\partial \mathbb B_d} |K_a(z,w)| d \mu(w) = \mathcal I_{2s}(\mu)(z) \approx \mathcal I_s(\mathcal I_s(\mu))(z) \lesssim 1,
  \]
  so in combination with the basic inequality $| \frac{1}{1 - r \langle z, w \rangle}| \le 2 | \frac{1}{1 - \langle z,w \rangle }|$ for $z,w \in \partial \mathbb B_d$
  and $0 \le r <1$, we see that (c) holds.
  
  To establish (b), notice that (c) implies that $f_\mu \in H^\infty(\mathbb B_d)$,
  so $f_\mu$ has radial boundary limits $f_\mu^*$ almost everywhere with respect to $\sigma$, and $f_\mu = P[f_\mu^*]$, the Poisson
  integral of $f_\mu^*$. 
  Fatou's lemma and the fact that $\Re K_a$ and $|K_a|$ are comparable show that for $\sigma$-almost every $z \in \partial \mathbb B_d$,
  the estimate
  \begin{align*}
    \Re f_{\mu}^*(z) = \lim_{r \nearrow 1} \int_{\partial \mathbb B_d} \Re K_a(r z, w) d \mu(w) &\gtrsim \int_{\partial \mathbb B_d} |K_a(z,w)| \, d \mu (w) \\
    &= \mathcal I_{2s}(\mu)(z).
    \end{align*}
    Now $C_{s,2}(K) = 0$ implies that $\sigma(K) = 0$ for compact sets $K \subset \partial \bB_d$.
    (This is because $\sigma \big|_K$ has finite energy, which for instance follows from Proposition \ref{prop:energy_fa} since $\cD_a$ is continuously contained in $\Hardys$.)
    Therefore, Item (2) and Lemma \ref{lem:equiv_riesz_pot} imply that
    $\Re f_{\mu}^*(z) \gtrsim 1$ for $\sigma$-almost every $z \in E$. In combination with $\Re f_\mu = P[ \Re f_{\mu}^*]$,
    this easily implies (b).
\end{proof}

In \cite{Cascante2014}, Cascante, F\`{a}brega and Ortega showed that if $0 < a < 1$ and if the holomorphic potential $f_\mu$ is bounded in $\mathbb{B}_d$, then
it is a multiplier of $\mathcal{D}_a$.
They also proved an $L^p$-analogue of this statement.
We will require an explicit estimate for the multiplier norm of $f_\mu$.
It seems likely that the arguments in \cite{Cascante2014} could be used to obtain such an estimate.
Instead, we will provide a different argument in the Hilbert space setting, based on the following result of Aleman, M\textsuperscript{c}Carthy,
Richter and the second named author \cite{AHM+17c}.
It also applies to the case $a=0$.
The function $V_f$ below is called the \emph{Sarason function}  of $f$.

\begin{thm}[\cite{AHM+17c}]
  \label{thm:Sarason_function}
  Let $0 \le a < 1$, let $f \in \mathcal{D}_a$ and define
  \begin{equation*}
    V_f(z) = 2 \langle  f, K_a(\cdot,z) f \rangle  - \|f\|^2.
  \end{equation*}
  If $\Re V_f$ is bounded in $\mathbb{B}_d$, then $f \in \Mult(\mathcal{D}_a)$ and
  \begin{equation*}
    \|f\|_{\Mult(\mathcal{D}_a)} \lesssim \| \Re V_f\|_{\infty}^{1/2},
  \end{equation*}
  where the implied constant only depends on $a$ and $d$.
\end{thm}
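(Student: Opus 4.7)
The plan is to exploit the complete Pick property of $\mathcal{D}_a$ for $0 \le a \le 1$, which was recalled in Section \ref{sec:prelim}. By the McCullough--Quiggin characterization of multipliers on complete Pick spaces, a function $f$ is a multiplier of $\mathcal{D}_a$ with $\|f\|_{\Mult(\mathcal{D}_a)} \le c$ if and only if the Pick kernel $(c^2 - f(z) \overline{f(w)}) K_a(z,w)$ is positive semidefinite on $\mathbb{B}_d \times \mathbb{B}_d$. Thus the task reduces to establishing positivity of this kernel with $c^2 \approx \|\Re V_f\|_\infty$.

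As a first observation, $K_a(z, 0) = 1$ for all $z$, so evaluating $V_f$ at $z=0$ yields $V_f(0) = 2 \|f\|^2 - \|f\|^2 = \|f\|^2$, and consequently $\|f\|^2 \le \|\Re V_f\|_\infty$. Expanding the kernel in a power series further shows that $V_f$ is holomorphic in $z \in \mathbb{B}_d$, so the hypothesis is a real-part boundedness condition on a holomorphic function. The key algebraic step would be to polarize and form the sesquianalytic extension $\widetilde{V}_f(z,w) := \langle f, K_a(\cdot, w) f \rangle + \overline{\langle f, K_a(\cdot, z) f \rangle} - \|f\|^2$, which agrees with $\Re V_f$ on the diagonal and is holomorphic in $w$, antiholomorphic in $z$. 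One then attempts to identify the kernel $(\|\Re V_f\|_\infty - \widetilde V_f(z,w)) K_a(z,w)$ with a positive combination that provides the desired factorization of $c^2 K_a(z,w) - f(z) \overline{f(w)} K_a(z,w)$.

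The main obstacle is producing a concrete positivity certificate, i.e.\ constructing a map $\psi$ into some auxiliary Hilbert space such that
\begin{equation*}
(c^2 - f(z) \overline{f(w)}) K_a(z,w) = \langle \psi(z), \psi(w) \rangle,
\end{equation*}
with $\psi$ built out of $f$ and quantities controlled by $\|\Re V_f\|_\infty$. A natural route is to realize $\mathcal{D}_a$ as a subspace of a larger Hilbert module on which multiplication by $K_a(\cdot, w)$ acts transparently, so that the desired identity becomes a Bessel-type inequality for the image of $f$. Extra care will be needed at the endpoint $a = 0$, where one must instead work with the equivalent complete Pick kernel $\frac{1}{\langle z,w \rangle} \log \frac{1}{1 - \langle z,w \rangle}$ recalled in Section \ref{sec:prelim}, and verify that this renormalization modifies the constants only by a factor depending on $d$.
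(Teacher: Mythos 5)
Your proposal stops precisely at the hard part and never closes it. You set up the McCullough--Quiggin criterion and correctly note that it suffices to exhibit a Kolmogorov factorization of $(c^2 - f(z)\overline{f(w)})K_a(z,w)$ with $c^2 \approx \|\Re V_f\|_\infty$, but you then write that ``the main obstacle is producing a concrete positivity certificate'' and leave it at that. That obstacle \emph{is} the theorem; without the factorization there is no proof. Moreover, the polarization $\widetilde V_f(z,w) = \langle f, K_a(\cdot,w) f\rangle + \overline{\langle f, K_a(\cdot,z) f\rangle} - \|f\|^2$ does not bear an obvious algebraic relation to $f(z)\overline{f(w)} K_a(z,w)$: the quantity $\langle f, K_a(\cdot,w) f\rangle$ is not a pointwise product expression in $f(w)$, so there is no formal manipulation that turns $(\|\Re V_f\|_\infty - \widetilde V_f(z,w)) K_a(z,w)$ into the Pick kernel you need. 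What is actually known about this implication (boundedness of $\Re V_f$ forcing $f$ to be a multiplier) uses more than the complete Pick property; the proof in \cite{AHM+17c} relies essentially on the fact that $\mathcal{D}_a$ has an equivalent norm given by an $L^2$-norm of derivatives of bounded order, and runs through Carleson-measure estimates, none of which appear in your sketch.

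The paper's own argument is considerably more modest and you should be aware of it: it simply cites \cite[Theorem~4.5]{AHM+17c}, which already gives $f \in \Mult(\mathcal{D}_a)$ together with the (dimensionally inhomogeneous) bound $\|f\|_{\Mult} \lesssim (\|\Re V_f\|_\infty + 3)^{N+1/2}$, and then observes that the Sarason function scales as $V_{tf} = t^2 V_f$ while the multiplier norm scales as $\|tf\|_{\Mult} = t\|f\|_{\Mult}$. Applying the cited estimate to $tf$ and optimizing $t = \|\Re V_f\|_\infty^{-1/2}$ collapses the exponent to $1/2$. This scaling step is the only new content and it is entirely absent from your proposal. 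Your observation that $V_f(0) = \|f\|^2$ (hence $\|f\|_\mathcal{H}^2 \le \|\Re V_f\|_\infty$) is correct but does not substitute for the factorization, and your remark about the endpoint $a=0$ is fine but peripheral. If you want to prove the theorem rather than re-derive \cite{AHM+17c} from scratch, the route is: quote the qualitative version, then rescale.
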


\begin{proof}
  In \cite[Theorem 4.5]{AHM+17c}, it is shown that if $\mathcal{H}$ is a normalized complete Pick space
  that admits an equivalent norm which is given by an $L^2$-norm of derivatives of order at most $N$,
  then boundedness of $\Re V_f$ implies that $f \in \Mult(\mathcal{H})$, and
  \begin{equation}
    \label{eqn:sarason_weak}
    \|f\|_{\Mult(\mathcal{H})} \lesssim (\|\Re V_f\|_\infty + 3)^{N+\frac{1}{2}}.
  \end{equation}
  This applies in particular to the spaces $\mathcal{D}_a$ for $0 < a < 1$.

  For $a=0$, we need to pass to an equivalent norm to obtain a normalized complete Pick space.
  Explicitly, define
  \begin{equation*}
    K(z,w) = \frac{K_0(z,w)+1}{2} = \sum_{n=0}^\infty a_n \langle z,w \rangle^n,
  \end{equation*}
  where $a_0 = 1$ and $a_n = \frac{1}{2n}$ for $n \ge 1$, and let $\widetilde{\mathcal{D}_0}$ be the associated
  reproducing kernel Hilbert space on $\mathbb{B}_d$.
  Since the sequence $(a_n/a_{n+1})$ is decreasing, $\widetilde{\mathcal{D}_0}$ is a normalized complete Pick space (see \cite[Lemma 7.38]{AM02}).
  The inner product in $\widetilde{\mathcal{D}_0}$ is given by
  \begin{equation*}
    \langle f,g \rangle_{\widetilde{\mathcal{D}_0}} = 2 \langle f,g \rangle_{\mathcal D_0} - f(0) \overline{g(0)};
  \end{equation*}
  in particular, $\widetilde{\mathcal{D}_0}= \mathcal{D}_0$ with equivalence of norms.
  Moreover,
  \begin{equation*}
    \langle f, K(\cdot,z) f \rangle_{\widetilde{\mathcal{D}_0}}
    = \langle f, K_0(\cdot,z) f \rangle_{\mathcal{D}_0} + \|f\|_{\mathcal{D}_0}^2 - |f(0)|^2,
  \end{equation*}
  so if $V_f^{\mathcal{H}}$ denotes the Sarason function of $f$ as determined by $\mathcal{H}$, then
  since $V_f^{\mathcal{D}_0}(0) = \|f\|_{\mathcal{D}_0}^2$, we see that
  \begin{equation*}
    \sup_{z \in \mathbb{B}_d} |\Re V_f^{\widetilde{\mathcal{D}_0}}(z)| 
    \lesssim \sup_{z \in \mathbb{B}_d} | \Re V_f^{\mathcal{D}_0}(z)|.
  \end{equation*}
  Consequently,  \eqref{eqn:sarason_weak} applies to $\mathcal{H} = \mathcal{D}_0$ as well.
  
  The improved bound on the multiplier norm of $f$ follows from the scaling properties of both sides
  of the inequality \eqref{eqn:sarason_weak}. Indeed, if $t > 0$, then $V_{t f} = t^2 V_f$, so applying inequality \eqref{eqn:sarason_weak}
  to the function $t f$, we find that
  \begin{equation*}
    \|f\|_{\Mult(\mathcal{D}_a)}^2 \lesssim \frac{1}{t^2} (t^2 \|\Re V_f\|_\infty +3)^{2 N + 1}
  \end{equation*}
  for all $t > 0$.
  If $\|\Re V_f\|_\infty = 0$, then taking $t \to \infty$ above yields $f=0$.
  If $\|\Re V_f\|_\infty \neq 0$, then choosing $t = \| \Re V_f\|_{\infty}^{-1/2}$,
  we obtain the desired estimate.
  (The choice of $t$ could be optimized to improve the implicit constants, but we will not do so here.)
\end{proof}

With the help of Theorem \ref{thm:Sarason_function}, we can establish the
desired multiplier norm estimate of $f_\mu$.
It can be regarded as a quantitative version of the result of Cascante, F\`abrega and Ortega \cite{Cascante2014}
in the Hilbert space setting.

\begin{prop}
  \label{prop:multiplier_norm}
  Let $0 \le a < 1$ and let $\mu \in M^+(\partial \mathbb{B}_d)$. If $f_\mu$ is bounded in $\mathbb{B}_d$,
  then $f_\mu$ is a multiplier of $\mathcal{D}_a$, and
  \begin{equation*}
    \|f_\mu\|_{\Mult(\mathcal{D}_a)} \approx \|f_\mu\|_\infty,
  \end{equation*}
  where the implied constants only depend on $a$ and $d$.
\end{prop}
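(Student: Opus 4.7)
The strategy is to apply Theorem \ref{thm:Sarason_function}: the plan is to show $\|\Re V_{f_\mu}\|_\infty \lesssim \|f_\mu\|_\infty^2$, which then yields $\|f_\mu\|_{\Mult(\mathcal{D}_a)} \lesssim \|f_\mu\|_\infty$. The reverse inequality $\|f_\mu\|_\infty \le \|f_\mu\|_{\Mult(\mathcal{D}_a)}$ is automatic from $M_\varphi^* K_a(\cdot,z) = \overline{\varphi(z)} K_a(\cdot,z)$.

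As a preparatory step, I would verify $f_\mu \in \mathcal{D}_a$. Since $\Re K_a \approx |K_a|$ on $\overline{\mathbb B_d} \times \mathbb B_d$ for $0 \le a < 1$ (the argument of $1-\langle z,w\rangle$ lies in a fixed sector away from the imaginary axis), the bound $\int |K_a(z,w)|\,d\mu(w) \lesssim \Re f_\mu(z) \le \|f_\mu\|_\infty$ holds for $z \in \mathbb{B}_d$. Applying Fatou and integrating once more against $\mu$,
\[
\mathcal{E}(\mu,\mathcal{D}_a) \lesssim \mu(\partial \mathbb{B}_d) \|f_\mu\|_\infty = f_\mu(0)\|f_\mu\|_\infty \le \|f_\mu\|_\infty^2,
\]
where $f_\mu(0) = \mu(\partial \mathbb B_d)$ since $K_a(0,w)=1$. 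Proposition \ref{prop:energy_fa} then gives $\|f_\mu\|_{\mathcal{D}_a}^2 \lesssim \|f_\mu\|_\infty^2$.

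The central computation is to identify the Sarason function. Formally, using $\rho_\mu(g) = \langle g, f_\mu\rangle_{\mathcal{D}_a}$ from Proposition \ref{prop:energy_fa} with $g = K_a(\cdot,z)f_\mu$, and taking complex conjugate using $\overline{K_a(w,z)}=K_a(z,w)$, one obtains
\[
V_{f_\mu}(z) = 2\int_{\partial \mathbb{B}_d} K_a(z,w)\,\overline{f_\mu(w)}\,d\mu(w) - \|f_\mu\|_{\mathcal{D}_a}^2.
\]
To justify this rigorously and avoid the problem that $K_a(\cdot,z)f_\mu$ need not belong to $\mathcal{D}_a$ a priori, I would work with the dilations $f_r(z) := f_\mu(rz)$ for $0 \le r < 1$. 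Each $f_r$ is the $K_a$-potential of the push-forward $\mu_r$ of $\mu$ under $w \mapsto rw$, which is supported on the compact set $r\partial \mathbb B_d \subset \mathbb B_d$. Hence $f_r$ extends holomorphically to a neighborhood of $\overline{\mathbb{B}_d}$ and is therefore a multiplier of $\mathcal{D}_a$, so the formula above holds genuinely for $f_r$ in place of $f_\mu$.

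Uniformly in $r$, the same $\Re K_a \approx |K_a|$ estimate bounds the integral term by $2\|f_r\|_\infty \int|K_a(z,rw)|\,d\mu(w) \lesssim \|f_\mu\|_\infty \cdot \Re f_r(z) \le \|f_\mu\|_\infty^2$, while the constant term satisfies $\|f_r\|_{\mathcal{D}_a}^2 \le \|f_\mu\|_{\mathcal{D}_a}^2 \lesssim \|f_\mu\|_\infty^2$ by Lemma \ref{lem:dilations_convergence} (applied in $\mathcal D_a$) and the previous step. Thus $\|\Re V_{f_r}\|_\infty \lesssim \|f_\mu\|_\infty^2$ uniformly in $r$, so Theorem \ref{thm:Sarason_function} yields $\|f_r\|_{\Mult(\mathcal{D}_a)} \lesssim \|f_\mu\|_\infty$ uniformly. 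Since $f_r \to f_\mu$ pointwise on $\mathbb{B}_d$ and bounded sets in $\Mult(\mathcal{D}_a)$ are weak-$*$ closed (with weak-$*$ convergence agreeing with pointwise convergence on bounded sets), we obtain $f_\mu \in \Mult(\mathcal{D}_a)$ with the desired bound.

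The main obstacle is the rigorous identification of $V_{f_r}$ and the verification that the potential of a measure supported compactly in $\mathbb{B}_d$ is a multiplier; both rely on standard facts but require care, especially since one must verify that the ``regular unitarily invariant'' machinery (including the dilation inequality in $\mathcal{D}_a$ and the weak-$*$ identification) transfers cleanly across the whole range $0 \le a < 1$, including the logarithmic endpoint where Theorem \ref{thm:Sarason_function} is applied to $\mathcal{D}_0$ with its equivalent complete Pick norm.
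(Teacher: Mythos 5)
Your proposal is correct and follows essentially the same route as the paper: estimate the Sarason function for the dilations $f_r$, use $\Re K_a \approx |K_a|$ to bound the relevant integral by $\|f_\mu\|_\infty \Re f_r(z) \le \|f_\mu\|_\infty^2$, apply Theorem~\ref{thm:Sarason_function}, and pass to the limit by weak-$*$ compactness. The only differences are cosmetic: the paper derives the integral formula for $\langle f_r, K_a(\cdot,z) f_r\rangle$ directly from the Bochner-integral representation $f_r = \int k_{rw}\,d\mu(w)$ and the reproducing property (so the preparatory step showing $f_\mu \in \cD_a$ is not needed), and for the bound $\|f_r\|_{\cD_a} \le \|f_\mu\|_{\cD_a}$ you should appeal to the elementary monotonicity of dilations in a unitarily invariant space rather than to Lemma~\ref{lem:dilations_convergence}, which is stated for the multiplier norm.
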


\begin{proof}
  Since the multiplier norm dominates the supremum norm, we have to show the inequality ``$\lesssim$''.
  Let $f = f_\mu$ and
  \begin{equation}
    \label{eqn:f_r}
    f_r(z) = f(r z) = \int_{\partial \mathbb{B}_d} K_a(r z,w) \, d \mu(w)
    = \int_{\partial \mathbb{B}_d} K_a(z, rw) \, d \mu (w).
  \end{equation}
  We will show that $\|f_r\|_{\Mult(\mathcal{D}_a)} \lesssim \|f_r\|_{\infty}$ for all $0 < r <1$, where the implied constant
  is independent of $f$ and $r$.
  To simplify notation, write $k_w(z) = K_a(z,w)$.
  We will use Theorem \ref{thm:Sarason_function} and instead show that
  \begin{equation*}
    \sup_{z \in \mathbb{B}_d} \Re \langle f_r, k_z f_r \rangle \lesssim \|f_r\|_\infty^2.
  \end{equation*}

  Since the map
  \begin{equation*}
    \partial \mathbb{B}_d \to \mathcal{D}_a, \quad w \mapsto k_{r w},
  \end{equation*}
  is continuous, the integral on the right-hand side of \eqref{eqn:f_r} converges in $\mathcal{D}_a$.
  Thus, by the reproducing property of the kernel,
  \begin{align*}
    \langle  f_r, k_z f_r \rangle &= \int_{\partial \mathbb{B}_d}
    \langle k_{r w}, k_z f_r \rangle \, d \mu(w)
    = \int_{\partial \mathbb{B}_d} \overline{f_r(r w) k_z(rw)} \, d \mu(w).
  \end{align*}
  Therefore,
  \begin{align*}
    \Re \langle f_r, k_z f_r \rangle \le \|f_r\|_\infty \int_{\partial \mathbb{B}_d} |k_z(r w)| \, d \mu (w)
    &\lesssim \|f_r\|_\infty \Re \int_{\partial \mathbb{B}_d} k_z(r w) \, d \mu(w) \\
    &= \|f_r\|_\infty \Re f_r(z)
    \le \|f_r\|_\infty^2. \qedhere
  \end{align*}
\end{proof}

We are ready to provide the direct proof of Theorem \ref{thm:main_dirichlet}, which we restate
in equivalent form.

\begin{thm}
  Let $0 \le a < 1$ and let $E \subset \partial \mathbb{B}_d$ be compact.
  Then $E$ is $\Mult(\mathcal{D}_a)$-totally null if and only if $\capac{E}{\mathcal{D}_a} = 0$.
\end{thm}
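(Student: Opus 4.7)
The ``only if'' direction is Proposition~\ref{prop:only_if}, so only the ``if'' direction remains. I would argue by contrapositive: assuming $E$ is not $\Mult(\mathcal{D}_a)$-totally null, the band property recalled after Definition~\ref{defn:Henkin_TN} produces a positive $\Mult(\mathcal{D}_a)$-Henkin measure $\mu$ supported on $E$ with $\mu(E)>0$, and I will derive a contradiction by constructing a sequence of multipliers $(g_n)$ that is simultaneously weak-$*$ null in $\Mult(\mathcal{D}_a)$ and bounded below in real part along $E$.

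Since $\capac{E}{\mathcal{D}_a}\approx C_{s,2}(E)=0$ by Corollary~\ref{cor:energy_comparable}, outer regularity of the Bessel capacity allows me to pick open sets $U_n \supset E$ with $C_{s,2}(U_n)\to 0$, and then compact neighborhoods $F_n$ of $E$ with $E \subset \inter(F_n) \subset F_n \subset U_n$ and $\capac{F_n}{\mathcal{D}_a}\to 0$. For each $n$, Lemma~\ref{lem:potential_basic} applied to $F_n$ produces a positive measure $\nu_n$ on $F_n$ whose holomorphic potential $g_n := f_{\nu_n}$ satisfies (a) $\|g_n\|_{\mathcal{D}_a}^2 \lesssim \capac{F_n}{\mathcal{D}_a}\to 0$, (b) $|g_n(z)|\lesssim 1$ uniformly on $\mathbb{B}_d$, and (c) $\liminf_{r\nearrow 1}\Re g_n(r\zeta) \gtrsim 1$ for every $\zeta \in \inter(F_n)$, in particular for every $\zeta \in E$. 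By Proposition~\ref{prop:multiplier_norm}, (b) upgrades to a uniform multiplier bound $\|g_n\|_{\Mult(\mathcal{D}_a)}\le C$.

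Now I would play two estimates on $\rho_\mu(g_n)$ against each other. First, (a) gives $g_n \to 0$ in $\mathcal{D}_a$ and hence pointwise on $\mathbb{B}_d$; since $(g_n)$ is norm-bounded in $\Mult(\mathcal{D}_a)$ and the weak-$*$ topology on $\Mult(\mathcal{D}_a)$ agrees with pointwise convergence on bounded sets, $g_n\to 0$ in the weak-$*$ topology, and weak-$*$ continuity of the Henkin functional $\rho_\mu$ forces $\rho_\mu(g_n)\to 0$. Second, Lemma~\ref{lem:dilations_convergence} combined with polynomial approximation of the dilations $(g_n)_r$ (for each fixed $r<1$, in multiplier norm) gives $\rho_\mu(g_n) = \lim_{r\nearrow 1}\int_{\partial\mathbb{B}_d} g_n(r\zeta)\,d\mu(\zeta)$; applying Fatou's lemma to the uniformly bounded nonnegative integrands $\Re g_n(r\cdot)+C$ and using (c) yields $\Re \rho_\mu(g_n) \gtrsim \mu(E) > 0$ for every $n$, contradicting $\rho_\mu(g_n)\to 0$.

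The main obstacle is arranging the $(g_n)$ so that they are simultaneously small in $\mathcal{D}_a$-norm (to vanish pointwise on $\mathbb{B}_d$), uniformly bounded in multiplier norm (to lie on a set where the weak-$*$ topology is controllable), and still bounded away from zero in real part along $E$; this is precisely the conjunction of Lemma~\ref{lem:potential_basic} with the Sarason-function estimate of Proposition~\ref{prop:multiplier_norm}, without which the argument collapses. A secondary technicality is justifying that the Henkin extension $\rho_\mu$ commutes with radial dilation on multipliers, which reduces via Lemma~\ref{lem:dilations_convergence} to polynomial approximation of each $(g_n)_r$ in multiplier norm.
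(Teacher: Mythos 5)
Your proposal is correct and matches the paper's argument essentially line for line: the same compact neighborhoods with vanishing capacity, the same application of Lemma~\ref{lem:potential_basic} together with Proposition~\ref{prop:multiplier_norm} to produce a multiplier-bounded sequence of holomorphic potentials that tends weak-$*$ to zero while staying bounded below in real part on $E$, and the same Fatou/weak-$*$ continuity argument against a positive Henkin measure supported on $E$. The only cosmetic differences are that you run the ``if'' direction as a proof by contradiction rather than showing $\nu(E)=0$ directly, and that you pass through outer regularity of the Bessel capacity $C_{s,2}$ where the paper invokes upper semi-continuity of $\capac{\cdot}{\mathcal{D}_a}$.
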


\begin{proof}
    The ``only if'' part was already established in Proposition \ref{prop:only_if}.
  Conversely, suppose that $\capac{E}{\mathcal{D}_a} = 0$. By upper semi-continuity
  of capacity, there exists a decreasing sequence $(E_n)$ of compact neighborhoods of $E$
so that $ \lim_{n \to \infty} \capac{E_n}{\mathcal{D}_a} = 0$; see \cite[Theorem 2.1.6]{EKM+14}.
Let $\mu_n$ be a positive measure supported on $E_n$ as in Lemma \ref{lem:potential_basic}
and let $g^{(n)} = f_{\mu_n}$ be the corresponding holomorphic potential.
We claim that
\begin{enumerate}
  \item $\liminf_{r \nearrow 1} \Re g^{(n)}(r \zeta) \gtrsim 1$ for all $\zeta \in E$ and all $n \in \mathbb{N}$;
  \item the sequence $(g^{(n)})$ converges to $0$ in the weak-$*$ topology of $\Mult(\mathcal{D}_a)$.
\end{enumerate}

Indeed,
Part (1) is immediate from Lemma \ref{lem:potential_basic} (b).
To see (2), we first observe that Lemma \ref{lem:potential_basic} (c) and Proposition \ref{prop:multiplier_norm}
imply that the sequence $(g^{(n)})$ is bounded in multiplier norm.
Using Lemma \ref{lem:potential_basic} (a), we see that $\|g^{(n)}\|_{\cD_a}^2 \lesssim \capac{E_n}{\cD_a}$,
so $(g^{(n)})$ converges to zero in the norm of $\cD_a$ and in particular pointwise on $\bB_d$, hence (2) holds.

Let now $\nu$ be a positive $\Mult({\mathcal{D}_a})$-Henkin measure that is supported on $E$.
We will finish the proof by showing that $\nu(E) = 0$; see the discussion following Definition \ref{defn:Henkin_TN}.
Item (1) above and Fatou's lemma show that
\begin{equation*}
  \nu(E) = \int_{E} 1 \, d \nu \lesssim \liminf_{r \nearrow 1} \int_{\partial \mathbb{B}_d} \Re g^{(n)}(r \zeta) \, d \nu(\zeta).
\end{equation*}
Since $\nu$ is $\Mult(\mathcal{D}_a)$-Henkin, the associated
integration functional $\rho_\nu$ extends to a weak-$*$ continuous functional on $\Mult(\mathcal{D}_a)$,
which we continue to denote by $\rho_\nu$.
Since $\lim_{r \nearrow 1} g^{(n)}_r = g^{(n)}$ in the weak-$*$ topology of $\Mult(\mathcal{D}_a)$ by Lemma \ref{lem:dilations_convergence},
we find that for all $n \in \mathbb{N}$,
\begin{equation*}
  \lim_{r \nearrow 1} \int_{\partial \mathbb{B}_d} \Re g^{(n)}(r \zeta) d \nu(\zeta)
  = \Re \rho_\nu(g_n).
\end{equation*}
Thus,
\begin{equation*}
  \nu(E) \lesssim \Re \rho_\nu(g_n)
\end{equation*}
for all $n \in \mathbb{N}$. Taking the limit $n \to \infty$ and using Item (2),
we see that $\nu(E) = 0$, as desired.
\end{proof}

\section{Proof of Theorem \ref{thm:main_CNP}}
\label{sec:main_CNP}

In this section, we prove a refined version of Theorem \ref{thm:main_CNP}.
Let $\mathcal{H}$ be a regular unitarily invariant space on $\mathbb{B}_d$.
Recall that a compact set $E \subset \mathbb{B}_d$ is said to be an \emph{unboundedness set for $\mathcal{H}$} if
there exists $f \in \mathcal{H}$ with $\lim_{r \nearrow 1} |f(r \zeta)| = \infty$ for all $\zeta \in E$.
We also say that $E$ is a \emph{weak unboundedness for $\mathcal{H}$} if there exists a separable auxiliary Hilbert space $\mathcal{E}$
and $f \in \mathcal{H} \otimes \mathcal{E}$
so that $\lim_{r \nearrow 1} \|f(r \zeta)\| = \infty$ for all $\zeta \in E$.

\begin{thm}
  Let $\mathcal{H}$ be a regular unitarily invariant complete Pick space on $\mathbb{B}_d$.
  The following assertions are equivalent for a compact set $E \subset \mathbb{B}_d$.
  \begin{enumerate}[label=\normalfont{(\roman*)}]
    \item $E$ is $\Mult(\mathcal{H})$-totally null.
    \item $E$ is an unboundedness set for $\mathcal{H}$.
    \item $E$ is a weak unboundedness set for $\mathcal{H}$.
  \end{enumerate}
\end{thm}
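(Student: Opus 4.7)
The plan is to establish the cycle (ii)$\Rightarrow$(iii)$\Rightarrow$(i)$\Rightarrow$(ii). The implication (ii)$\Rightarrow$(iii) is immediate by taking $\mathcal{E} = \mathbb{C}$ and the same scalar witness, so the real content lies in (iii)$\Rightarrow$(i) and (i)$\Rightarrow$(ii).

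For (iii)$\Rightarrow$(i), let $f \in \mathcal{H} \otimes \mathcal{E}$ witness that $E$ is a weak unboundedness set, and let $\mu$ be any positive $\Mult(\mathcal{H})$-Henkin measure supported on $E$. By the band property of Henkin measures recalled after Definition \ref{defn:Henkin_TN}, to prove (i) it suffices to show that $\mu(E) = 0$ for every such $\mu$. The key input, where the complete Pick hypothesis enters, is a trace-type theorem for Henkin measures available from \cite{CD16,BHM17,DH20}: for a positive $\Mult(\mathcal{H})$-Henkin measure $\mu$, the radial boundary value map extends to a bounded linear operator $\mathcal{H} \otimes \mathcal{E} \to L^2(\mu; \mathcal{E})$. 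This forces $\lim_{r \nearrow 1} \|f(r\zeta)\|_{\mathcal{E}}$ to be finite for $\mu$-almost every $\zeta$, contradicting the defining property of $E$ on $\mathrm{supp}(\mu) \subseteq E$ unless $\mu(E) = 0$.

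For (i)$\Rightarrow$(ii), assume $E$ is $\Mult(\mathcal{H})$-totally null. By \cite[Theorem 1.4]{DH20} applied to $g \equiv 1$, there exists $h \in A(\mathcal{H})$ with $h|_E = 1$, $|h(z)| < 1$ for $z \in \overline{\mathbb{B}_d} \setminus E$, and $\|h\|_{\Mult(\mathcal{H})} = 1$. I would then produce $f \in \mathcal{H}$ as a holomorphic function of $h$ that blows up as $h \to 1$; a natural candidate is $f = -\log(1-h) = \sum_{n \ge 1} h^n / n$. The membership $f \in \mathcal{H}$ would be established via Theorem \ref{thm:Sarason_function}: compute the Sarason function $V_f$ in terms of $h$ and the reproducing kernel, and exploit $\|h\|_{\Mult(\mathcal{H})} = 1$ together with the complete Pick structure to bound $\Re V_f$ on $\mathbb{B}_d$, placing $f \in \Mult(\mathcal{H}) \subset \mathcal{H}$. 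Since $h$ is continuous on $\overline{\mathbb{B}_d}$ with $h(\zeta) = 1$ for $\zeta \in E$, we have $h(r\zeta) \to 1$ as $r \nearrow 1$, hence $|f(r\zeta)| \to \infty$ for all $\zeta \in E$, giving (ii).

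The main obstacle is the $\mathcal{H}$-membership of $f = -\log(1-h)$. The naive termwise estimate $\|h^n\|_{\mathcal{H}} \le \|1\|_{\mathcal{H}}$ only yields the divergent bound $\sum_{n \ge 1} \|h^n\|_{\mathcal{H}}/n = \infty$, so a subtler functional-calculus argument — Theorem \ref{thm:Sarason_function} paired with the complete Pick structure, or a direct identity for $V_f$ — is essential. Should the scalar approach prove insufficient, a fallback is a vector-valued peak interpolation: construct a column multiplier $\Phi \in \Mult(\mathcal{H}, \mathcal{H} \otimes \ell^2)$ peaking on $E$ and choose weights $(a_n)$ so that $f = \sum a_n h^n e_n \in \mathcal{H} \otimes \ell^2$ has $\|f(r\zeta)\|_{\ell^2} \to \infty$ on $E$, at least establishing (iii) directly.
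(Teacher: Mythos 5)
Your (ii)$\Rightarrow$(iii) is fine, but each of the two substantive implications has a genuine gap, and the paper's route is different in both.

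For (i)$\Rightarrow$(ii), the Sarason function plan is self-defeating: Theorem \ref{thm:Sarason_function} concludes membership in $\Mult(\mathcal H)$, hence boundedness, while any unboundedness witness for $E$ blows up radially on $E$ and so cannot be a multiplier. The vector-valued fallback has the same problem in disguise: with $h|_E=1$ and $\|h\|_{\Mult(\mathcal H)}=1$ you get $\|h^n\|_{\mathcal H}\le 1$ and $|h^n(r\zeta)|\to 1$ for $\zeta\in E$, so for $f=\sum_n a_n h^n e_n$ both $\|f\|^2_{\mathcal H\otimes\ell^2}$ and the candidate limit $\lim_{r\nearrow 1}\|f(r\zeta)\|^2$ are controlled by $\sum_n|a_n|^2$; you cannot make the second infinite while keeping the first finite. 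The paper's argument instead uses the joint Pick--peak interpolation theorem \cite[Theorem 1.5]{DH20} to produce, for each $\varepsilon>0$, a contractive $\eta\in A(\mathcal H)$ with $\eta|_E=(1-\varepsilon^2)^{1/2}$ \emph{and} $\eta(0)=0$. The column $[\,\varepsilon,\ (1-\varepsilon^2)^{1/2}\eta\,]^T$ is then a contractive multiplier, and the condition $\eta(0)=0$ is exactly what lets one apply the ``(b)$\Rightarrow$(a)'' direction of the complete Pick factorization theorem \cite[Theorem 1.1]{AHM+17c} to conclude that $f=\varepsilon/(1-(1-\varepsilon^2)^{1/2}\eta)$ lies in the closed unit ball of $\mathcal H$, with $\Re f\ge 0$ and $f|_E=1/\varepsilon$. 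Summing such functions with $\varepsilon=2^{-n}$ gives the witness. Peaking alone, without the extra interpolation condition at $0$, does not reach this conclusion.

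For (iii)$\Rightarrow$(i), you invoke a trace theorem asserting that every positive $\Mult(\mathcal H)$-Henkin measure $\mu$ induces a bounded radial-boundary-value map $\mathcal H\otimes\mathcal E\to L^2(\mu;\mathcal E)$. No such statement is established in \cite{CD16,BHM17,DH20}; it would amount to every Henkin measure being $\mathcal H$-Carleson, which is a stronger property and does not follow from the Henkin (weak-$*$ continuity) definition. The paper instead uses the converse direction ``(a)$\Rightarrow$(b)'' of \cite[Theorem 1.1]{AHM+17c}: factor the witness as $f=\Phi/(1-\psi)$ with $\Phi,\psi$ contractive multipliers and $|\psi|<1$ on $\mathbb B_d$. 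Since $\|\Phi(z)\|\le 1$ pointwise, $\|f(r\zeta)\|\to\infty$ forces $\psi(r\zeta)\to 1$ for $\zeta\in E$. For a positive Henkin $\mu$ supported on $E$, dominated convergence in $r$ together with $\psi_r^n\to\psi^n$ weak-$*$ (Lemma \ref{lem:dilations_convergence}) gives $\mu(E)=\rho_\mu(\psi^n)$ for all $n$, and $\psi^n\to 0$ weak-$*$ because $|\psi|<1$, so $\mu(E)=0$. In both directions the work is done by the de Branges--Rovnyak-type factorization for complete Pick spaces combined with the weak-$*$ continuity encoded in the Henkin property, not by an embedding or trace theorem.
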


\begin{proof}
  (i) $\Rightarrow$ (ii)
  Suppose that $E$ is totally null. In the first step, we will show that for each $M > 1$,
  there exists $f \in \mathcal{H} \cap A(\mathbb B_d)$ satisfying
  \begin{enumerate}
    \item $f \big|_E = M$,
    \item $\|f\|_\mathcal{H} \le 1$, and
    \item  $\Re f \ge 0$ on $\overline{\mathbb{B}_d}$.
  \end{enumerate}
  Let $\varepsilon = 1/M$.
  Since $E$ is totally null, the simultaneous Pick and peak interpolation result \cite[Theorem 1.5]{DH20}
  shows that there exists $\eta \in A(\mathcal{H}) \subset \mathcal H \cap A(\mathbb B_d)$ satisfying
  $\eta \big|_E = (1 - \varepsilon^2)^{1/2}$, $\eta(0) = 0$ and $\|\eta\|_{\Mult(\mathcal{H})} \le 1$.
  It follows that the column multiplier
  \begin{equation*}
    \begin{bmatrix}
      \varepsilon \\ (1 - \varepsilon^2)^{1/2} \eta
    \end{bmatrix}
  \end{equation*}
  has multiplier norm at most one, so the implication (b) $\Rightarrow$ (a) of part (i) of \cite[Theorem 1.1]{AHM+17c}
  implies that the function $f$ defined by
  \begin{equation*}
    f = \frac{\varepsilon}{1 - (1 - \varepsilon^2)^{1/2} \eta}
  \end{equation*}
  belongs to the closed unit ball of $\mathcal{H}$.
  Moreover, since $\|\eta\|_{\Mult(\mathcal{H})} \le 1$, we find that $|\eta(z)| \le 1$
  for all $\zeta \in \overline{\mathbb{B}_d}$,
  from which it follows that $f \in A(\mathbb B_d)$ and $\Re f \ge 0$.
  Clearly, $f \big|_E = \frac{1}{\varepsilon} = M$.
  This observation finishes the construction of $f$.

  The above construction yields, for each $n \ge 1$, a function $f_n \in \mathcal{H} \cap A(\mathbb B_d)$ satisfying
  $f_n \big|_E = 1$, $\|f_n\|_{\mathcal{H}} \le 2^{-n}$ and $\Re f_n \ge 0$.
  Define $f = \sum_{n=1}^\infty f_n \in \mathcal{H}$.
  Let $\zeta \in E$.
  Then for each $N \in \mathbb{N}$, we have that
  \begin{equation*}
    \liminf_{r \nearrow 1} \Re f(r \zeta) \ge \sum_{n=1}^N \Re f_n(\zeta) = N.
  \end{equation*}
  Thus, $\lim_{r \nearrow 1} |f(r \zeta)| = \infty$ for all $\zeta \in E$, so $E$ is an unboundedness set for $\mathcal{H}$.

  (ii) $\Rightarrow$ (iii) is trivial.

  (iii) $\Rightarrow$ (i)
  Suppose that $E$ is a weak unboundedness set for $\mathcal{H}$ and let $f \in \mathcal{H} \otimes \mathcal{E}$ satisfy $\|f\| \le 1$ and $\lim_{r \nearrow 1} \|f(r \zeta)\| = \infty$ for all $\zeta \in E$. 
  By the implication (a) $\Rightarrow$ (b) of part (i) of \cite[Theorem 1.1]{AHM+17c}, we may write $f = \frac{\Phi}{1 - \psi}$,
  where $\Phi \in \Mult(\mathcal{H}, \mathcal{H} \otimes \mathcal{E}),\psi \in \Mult(\mathcal{H})$ have multiplier norm at most $1$
  and $|\psi(z)| < 1$ for all $z \in \mathbb{B}_d$.
  In particular, $\|\Phi(z)\| \le 1$ for all $z \in \mathbb{B}_d$, hence $\lim_{r \nearrow 1} \psi(r \zeta) = 1$ for all $\zeta \in E$.

  Let now $\mu$ be a positive $\Mult(\mathcal{H})$-Henkin measure that is supported on $E$ and let $\rho_\mu$
  denote the associated weak-$*$ continuous integration functional on $\Mult(\mathcal{H})$.
  We have to show that $\mu(E) = 0$; see the discussion following Definition \ref{defn:Henkin_TN}.
  To this end,
  we write $\psi_r(z) = \psi(r z)$ and let $n \in \mathbb{N}$.
  Applying the dominated convergence theorem and the fact that
  $\lim_{r \nearrow 1} \psi_r^n = \psi^n$ in the weak-$*$ topology of $\Mult(\mathcal{H})$ (see Lemma \ref{lem:dilations_convergence}), we find that
  \begin{equation*}
    \mu(E) = \lim_{r \nearrow 1} \int_{E} \psi_r^n \, d \mu  = \lim_{r \nearrow 1} \int_{\partial \mathbb{B}_d} \psi_r^n \, d \mu = \rho_\mu(\psi^n).
  \end{equation*}
  Since $\psi$ is a contractive multiplier satisfying $|\psi(z)| < 1$ for all $z \in \mathbb{B}_d$, it follows that $\psi^n$ tends to zero
  in the weak-$*$ topology of $\Mult(\mathcal{H})$. So taking the limit $n \to \infty$ above, we conclude that $\mu(E) = 0$, as desired.
\end{proof}

Let us briefly compare the direct proof of the implication ``capacity $0$ implies totally null''
given in Section \ref{sec:Dirichlet} with the proof via Theorem \ref{thm:main_CNP}.
If $E \subset \partial \mathbb B_d$ is a compact set with $C_{s,2}(E) = 0$,
then the work of Ahern and Cohn \cite{AC89} and of Cohn and Verbitsky \cite{CV95}
shows that $E$ is unboundedness set for $\mathcal H_s$.
To show this, they use holomorphic potentials and their fundamental properties
(cf.\ Lemma \ref{lem:potential_basic}) to construct a function $f \in \mathcal H_s$ satisfying
$\lim_{r \nearrow 1} |f(r \zeta)| = \infty$ for all $\zeta \in E$.
Proceeding via Theorem \ref{thm:main_CNP}, one then applies the factorization
result \cite[Theorem 1.1]{AHM+17c} to $f$ to obtain a multiplier $\psi$ of $\mathcal H$ of norm
at most $1$ satisfying $\lim_{r \nearrow 1} \psi(r \zeta) = 1$ for all $\zeta \in E$,
from which the totally null property of $E$ can be deduced.

The direct proof given in Section \ref{sec:Dirichlet} uses holomorphic potentials as well, this time
to construct a sequence of functions in $\mathcal H$, which, roughly speaking,
have large radial limits on $E$ compared to their norm.
It is then shown that the holomorphic potentials themselves form a bounded sequence of multipliers,
from which the totally null property of $E$ can once again be deduced.

\bibliographystyle{amsplain}
\bibliography{literature}

\providecommand{\bysame}{\leavevmode\hbox to3em{\hrulefill}\thinspace}
\providecommand{\MR}{\relax\ifhmode\unskip\space\fi MR }
\providecommand{\MRhref}[2]{%
  \href{http://www.ams.org/mathscinet-getitem?mr=#1}{#2}
}
\providecommand{\href}[2]{#2}
\begin{thebibliography}{10}

\bibitem{Adams1996}
David~R. Adams and Lars~Inge Hedberg, \emph{Function spaces and potential
  theory}, Springer Berlin Heidelberg, 1996.

\bibitem{AM02}
Jim Agler and John~E. McCarthy, \emph{Pick interpolation and {H}ilbert function
  spaces}, Graduate Studies in Mathematics, vol.~44, American Mathematical
  Society, Providence, RI, 2002. \MR{1882259 (2003b:47001)}

\bibitem{AC89}
Patrick Ahern and William Cohn, \emph{Exceptional sets for {H}ardy {S}obolev
  functions, {$p>1$}}, Indiana Univ. Math. J. \textbf{38} (1989), no.~2,
  417--453. \MR{997390}

\bibitem{AHM+17a}
Alexandru Aleman, Michael Hartz, John~E. McCarthy, and Stefan Richter,
  \emph{The {S}mirnov class for spaces with the complete {P}ick property}, J.
  Lond. Math. Soc. (2) \textbf{96} (2017), no.~1, 228--242.

\bibitem{AHM+17c}
\bysame, \emph{Factorizations induced by complete {N}evanlinna-{P}ick factors},
  Adv. Math. \textbf{335} (2018), 372--404.

\bibitem{Arveson98}
William Arveson, \emph{Subalgebras of {$C\sp *$}-algebras. {III}.
  {M}ultivariable operator theory}, Acta Math. \textbf{181} (1998), no.~2,
  159--228. \MR{1668582 (2000e:47013)}

\bibitem{BHM17}
Kelly Bickel, Michael Hartz, and John~E. McCarthy, \emph{A multiplier algebra
  functional calculus}, Trans. Amer. Math. Soc. \textbf{370} (2018), no.~12,
  8467--8482.

\bibitem{BC85}
Leon Brown and William Cohn, \emph{Some examples of cyclic vectors in the
  {D}irichlet space}, Proc. Amer. Math. Soc. \textbf{95} (1985), no.~1, 42--46.
  \MR{796443}

\bibitem{Cascante2014}
C.~Cascante, J.~F{\`{a}}brega, and J.~M. Ortega, \emph{Holomorphic potentials
  and multipliers for {H}ardy-{S}obolev spaces}, Monatsh. Math. \textbf{177}
  (2014), no.~2, 185--201.

\bibitem{CO95}
Carme Cascante and Joaqu\'{\i}n~M. Ortega, \emph{Carleson measures on spaces of
  {H}ardy-{S}obolev type}, Canad. J. Math. \textbf{47} (1995), no.~6,
  1177--1200. \MR{1370513}

\bibitem{Cascante2012}
Carme Cascante and Joaquin~M. Ortega, \emph{On a characterization of bilinear
  forms on the {D}irichlet space}, Proc. Amer. Math. Soc. \textbf{140} (2012),
  no.~7, 2429--2440. \MR{2898705}

\bibitem{CD16a}
Rapha\"el Clou\^atre and Kenneth~R. Davidson, \emph{Absolute continuity for
  commuting row contractions}, J. Funct. Anal. \textbf{271} (2016), no.~3,
  620--641. \MR{3506960}

\bibitem{CD16}
\bysame, \emph{Duality, convexity and peak interpolation in the
  {D}rury-{A}rveson space}, Adv. Math. \textbf{295} (2016), 90--149.
  \MR{3488033}

\bibitem{CD18}
Rapha\"{e}l Clou\^{a}tre and Kenneth~R. Davidson, \emph{Ideals in a multiplier
  algebra on the ball}, Trans. Amer. Math. Soc. \textbf{370} (2018), no.~3,
  1509--1527. \MR{3739183}

\bibitem{CV95}
W.~S. Cohn and I.~E. Verbitsky, \emph{Nonlinear potential theory on the ball,
  with applications to exceptional and boundary interpolation sets}, Michigan
  Math. J. \textbf{42} (1995), no.~1, 79--97. \MR{1322190}

\bibitem{Cohn89}
William~S. Cohn, \emph{Boundary interpolation for continuous holomorphic
  functions}, Canad. J. Math. \textbf{41} (1989), no.~5, 769--785. \MR{1015582}

\bibitem{DH20}
Kenneth~R. Davidson and Michael Hartz, \emph{Interpolation and duality in
  algebras of multipliers on the ball}, arXiv:2003.14341 (2020).

\bibitem{EKM+14}
Omar El-Fallah, Karim Kellay, Javad Mashreghi, and Thomas Ransford, \emph{A
  primer on the {D}irichlet space}, Cambridge Tracts in Mathematics, vol. 203,
  Cambridge University Press, Cambridge, 2014. \MR{3185375}

\bibitem{EL19}
Y.~Elmadani and I.~Labghail, \emph{Cyclicity in {D}irichlet spaces}, Canad.
  Math. Bull. \textbf{62} (2019), no.~2, 247--257. \MR{3952515}

\bibitem{Gamelin69}
Theodore~W. Gamelin, \emph{Uniform algebras}, Prentice-Hall, Inc., Englewood
  Cliffs, N. J., 1969. \MR{0410387 (53 \#14137)}

\bibitem{GHX04}
Kunyu Guo, Junyun Hu, and Xianmin Xu, \emph{{Toeplitz} algebras, subnormal
  tuples and rigidity on reproducing {$\mathbf C[z_1,\dots,z_d]$}-modules}, J.
  Funct. Anal. \textbf{210} (2004), no.~1, 214--247. \MR{2052120 (2005a:47007)}

\bibitem{Hoffman62}
Kenneth Hoffman, \emph{Banach spaces of analytic functions}, Prentice-Hall
  Series in Modern Analysis, Prentice-Hall Inc., Englewood Cliffs, N. J., 1962.
  \MR{0133008 (24 \#A2844)}

\bibitem{PK82}
V.~V. Peller and S.~V. Khrushch\"{e}v, \emph{Hankel operators, best
  approximations and stationary {G}aussian processes}, Uspekhi Mat. Nauk
  \textbf{37} (1982), no.~1(223), 53--124, 176. \MR{643765}

\bibitem{Rudin08}
Walter Rudin, \emph{Function theory in the unit ball of {$\mathbb C\sp n$}},
  Classics in Mathematics, Springer-Verlag, Berlin, 2008, Reprint of the 1980
  edition. \MR{2446682 (2009g:32001)}

\bibitem{ZZ08}
Ruhan Zhao and Kehe Zhu, \emph{Theory of {B}ergman spaces in the unit ball of
  {$\mathbb C^n$}}, M\'{e}m. Soc. Math. Fr. (N.S.) (2008), no.~115, vi+103 pp.
  (2009). \MR{2537698}

\end{thebibliography}

\end{document}